\documentclass[hidelinks,onefignum,onetabnum, final]{siamart251216}

\ifpdf
\hypersetup{
  pdftitle={SDP for ternary quadratic problems},
  pdfauthor={F. de Meijer, V. Piccialli, R. Sotirov, A.M. Sudoso}
}
\fi

\usepackage[english]{babel}
\usepackage[short]{optidef}
\usepackage{amsmath}
\usepackage{amssymb}
\usepackage{bbm}
\usepackage{graphicx}
\usepackage{booktabs}
\usepackage{tabularx}
\usepackage{comment}
\usepackage{adjustbox}
\usepackage{longtable}
\usepackage{enumitem}
\usepackage{algorithm}
\usepackage{algorithmic}
\usepackage{xcolor}
\usepackage{multirow}
\usepackage{nicematrix}
\usepackage{tikz}
\usepackage{cleveref}
\usepackage{todonotes}
\usepackage{subcaption}

\newcommand{\todoinlinefrank}[1]{\todo[inline,author=Frank,color=blue!20]{#1}}

\DeclareMathOperator*{\argmin}{arg\,min}

\newcommand{\st}{\textrm{s.t.}}
\newcommand{\sn}{{\mathcal S}^n}

\newcommand{\Diag}{\textrm{Diag}}

\newcommand{\Col}{\textrm{Col}}
\newcommand{\diag}{\textrm{diag}}
\newcommand{\rank}{\textrm{rank}}
\newcommand{\tr}{\textrm{tr}}
\newcommand{\conv}{\textrm{conv}}
\newcommand{\R}{\mathbb{R}}
\newcommand{\N}{\mathbb{N}}
\newcommand{\Z}{\mathbb{Z}}
\newcommand{\Q}{\mathbb{Q}}

\definecolor{ForestGreen}{RGB}{25, 85, 55}
\allowdisplaybreaks

\newtheorem{remark}{Remark}

\NiceMatrixOptions
{
    custom-line ={command= H, tikz= dashed, width= 1mm}, 
    custom-line = {letter= I, tikz= dashed, width= 1mm}, 
}

\title{Beyond binarity: Semidefinite programming for ternary quadratic problems}

\author{
Frank de Meijer\thanks{Delft Institute of Applied Mathematics, Delft University of Technology, The Netherlands. \texttt{f.j.j.demeijer@tudelft.nl}}
\and 
Veronica Piccialli\thanks{Department of Computer, Control and Management Engineering ``Antonio Ruberti", Sapienza University of Rome, Italy.   \texttt{veronica.piccialli@uniroma1.it}}
\and
Renata Sotirov\thanks{CentER, Department of Econometrics and OR, Tilburg University, The Netherlands. \texttt{r.sotirov@uvt.nl}}
\and 
Antonio M. Sudoso\thanks{Department of Computer, Control, and Management Engineering ``Antonio Ruberti”, Sapienza University of Rome, Italy. \texttt{antoniomaria.sudoso@uniroma1.it}}
}

\date{\today}

\begin{document}
\maketitle 

\begin{abstract}
   We study the ternary quadratic problem (TQP), a quadratic optimization problem with linear constraints where the variables take values in $\{0,\pm 1\}$. While semidefinite programming (SDP) techniques are well established for $\{0,1\}$- and $\{\pm 1\}$-valued quadratic problems, no dedicated integer semidefinite programming framework exists for the ternary case. In this paper, we introduce a ternary SDP formulation for the TQP that forms the basis of an exact solution approach.

   We derive new theoretical insights in rank-one ternary positive semidefinite matrices, which lead to a basic SDP relaxation that is further strengthened by valid triangle, RLT, split and $k$-gonal inequalities. These are embedded in a tailored branch-and-bound algorithm that iteratively solves strengthened SDPs, separates violated inequalities, applies a ternary branching strategy and computes high-quality feasible solutions. We test our algorithm on TQP variations motivated by practice, including unconstrained, linearly constrained and quadratic ratio problems. Computational results on these instances demonstrate the effectiveness of the proposed algorithm.
\end{abstract}

\begin{keywords} ternary optimization problem, integer semidefinite programming, branch-and-bound, quadratic ratio problems
\end{keywords}
\begin{MSCcodes}
90C10, 90C20, 90C22, 90C26  
\end{MSCcodes}


\section{Introduction}

Integer semidefinite programming (ISDP) may be seen as a generalization of integer linear programming, 
where the vector of variables is replaced by a positive semidefinite (PSD) matrix variable whose elements are constrained to be integers.
Recent results show that integer semidefinite programs allow for more compact and substantially different formulations
for several problem classes compared to traditional modeling approaches, and also enable the development of new algorithmic techniques. 
These programs do not include ${\mathcal NP}$-hard rank constraints, but utilize an alternative notion of exactness in terms of integrality of  PSD matrices.
Although it is straightforward to embed a linear integer program (IP)  into the space of diagonal integer PSD matrices,
it is  generally not clear how to construct an ISDP program for a  quadratic IP.
However, it is  well known that quadratic optimization problems with variables in $\{\pm 1\}$, e.g., the  max-cut problem, 
can be formulated as  ISDP  problems, see e.g.,~\cite{DezaLaurent1997,GoemansWilliamson}. 
It is also known how to reformulate  binary quadratically constrained quadratic programs and
binary quadratic matrix programs as binary semidefinite programs \cite{DeMeijerSotirov23}.
In this work, we introduce an ISDP formulation for  the ternary quadratic problem (TQP), that is an optimization problem with quadratic objective and linear constraints, where the decision variables take values in $\{0, \pm 1\}$. 
To the best of our knowledge, there are no up to date results on ternary semidefinite programs.

The TQP belongs to the class of  nonlinear IPs,  which have received increased attention in recent years. 
Ternary optimization problems are frequently  studied in the literature as special cases of  quadratic integer programs.
In 2010, Letchford~\cite{10.1007/978-3-642-13036-6_20} studied  integer quadratic quasi-polyhedra, i.e., 
polyhedra with a countably infinite number of facets.
Such polyhedra arise as feasible sets of unconstrained integer quadratic programming problems.
Letchford’s work was extended to convex sets for nonconvex mixed-integer quadratic programming in~\cite{Burer2014UnboundedCS}.
Buchheim et al.~\cite{Buchheim2012SemidefiniteRF,Buchheim2018SDPbasedBF} propose semidefinite programming (SDP) relaxations for nonconvex mixed-integer quadratic optimization problems, including an SDP relaxation for the TQP.
These relaxations are derived from exact problem reformulations that involve a nonconvex rank-one constraint.

Approaches for solving optimization problems with ternary variables also appear in the broader literature on general quadratic integer programming, where the TQP is often considered in computational experiments. 
In the following, we provide a list of these works.
In 2012, Buchheim et al.,~\cite{christoph_2011} present a branch-and-bound (B\&B)  algorithm for solving convex quadratic unconstrained IPs for instances of up to 120 variables. 
A B\&B algorithm for nonconvex quadratic IPs with box constraints, presented in \cite{buchheim2013exact}, solves instances with up to 50 variables.
An SDP-based B\&B framework for nonconvex quadratic mixed-integer programming, introduced in~\cite{Buchheim2012SemidefiniteRF} and improved in~\cite{Buchheim2018SDPbasedBF}, solves
 instances of the nonconvex quadratic unconstrained ternary optimization  problem (QUTO) 
 up to 80 variables and TQP instances with one linear inequality up to 50 variables.
For the closely related max-cut problem, several SDP-based solvers exist, see e.g.,~\cite{HrgaPovh2021,Krislock2017BiqCrunch,gusmeroli2022biqbin}.   
The parallel version of the solver {MADAM} \cite{HrgaPovh2021} solves instances on graphs with  up to 180 vertices.
We note that general nonconvex mixed-integer programming solvers can also be used to solve nonconvex quadratic IPs, such as
{COUENNE}~\cite{Belotti2009Branching}, {BARON}~\cite{Sahinidis2016BARON}, and Gurobi~\cite{gurobi}.
However, these solvers are often outperformed on nonconvex TQP instances by the specialized solvers mentioned earlier, while they perform well on TQP instances with a convex objective function.

An SDP-based approximation algorithm for quadratic programs with ratio objective and ternary variables is derived in~\cite{bhaskara2012},
and an SDP-based randomized algorithm for the sparse integer least squares problem (SILS) was studied in~\cite{PiaZhou}. 
A heuristic  approach for the TQP with circulant  matrix is presented in~\cite{CallegariEtAll}.

We highlight several applications of optimization problems with ternary variables. 
Problems arising in engineering applications may be formulated as the TQP.
For instance, a variant of TQP with a single linear constraint appears in the design of turbomachines~\cite{Phuong}.
In electronics, the filtered approximation problem can be modeled as a convex QUTO~\cite{CallegariEtAll}.
Quadratic programs with ratio objectives and ternary variables arise when modeling density clustering problems with two voting options, including the possibility of abstention~\cite{Amaral2015CopositivitybasedAF}, 
in PageRank for folksonomies in social media \cite{hotho2006information},  in discovering conflicting groups in signed networks~\cite{NEURIPS2020_7cc538b1},  and when computing max-cut gain using eigenvalue techniques \cite{Trevisan2009MaxCut}.
The SILS problem finds applications in many fields, including cybersecurity, sensor networks,  digital fingerprints, array signal processing, and multiuser detection algorithms, see e.g.,~\cite{PiaZhou} and references therein.

\subsection*{Main results and outline}

This paper studies theoretical and algorithmic approaches for the TQP, resulting in an SDP-based B\&B algorithm for general ternary quadratic programs. Our algorithm is tested on various special cases of the TQP inspired by practical applications, showing its effectiveness compared to general nonconvex mixed-integer programming solvers. 

Our approach starts with a theoretical analysis of the set of ternary PSD matrices. We recall some known results on integer PSD matrices and extend these with several new characterizations of the set of ternary PSD matrices. 
One of these characterization induces a ternary SDP formulation for the TQP. Relaxation of the integrality constraints yields an SDP relaxation, which serves as the foundation of our approach. We show that the relaxation can be strengthened by a large number of linear inequalities, including the triangle, $k$-gonal, RLT (i.e., reformulation linearization technique), standard split and non-standard split inequalities.

The basic SDP relaxation and the derived inequalities are combined in a specialized B\&B algorithm for solving nonconvex  TQP instances. 
The strengthened SDP bounds are incorporated in a ternary branching framework, 
where a Variable Neighborhood Search (VNS) heuristic is embedded to obtain high-quality feasible solutions throughout the algorithm. 
Our B\&B algorithm is tested on three special cases of the TQP. The first two cases correspond to the unconstrained version of the problem (i.e., QUTO) and the TQP with one linear constraint (i.e., TQP-Linear)~\cite{Phuong}. The third type of TQP involves the minimization of the ratio of two quadratic functions over the set $\{0,\pm 1\}^n$, which we denote by TQP-Ratio~\cite{Amaral2015CopositivitybasedAF, hotho2006information,Trevisan2009MaxCut}. 
These problems can be solved using a parametrized approach inspired by~\cite{Dinkelbach1967NonlinearFractional}, where each subproblem boils down to solving a QUTO. Alternatively, we also introduce an SDP relaxation for TQP-Ratio problems based on a reformulation of the objective function inspired by the convexification approach from~\cite{he2025convexification}.

In our computational analysis, we compare our B\&B algorithm with state-of-the-art commercial solvers, such as \texttt{GUROBI}~\cite{gurobi}. Our results show that we significantly outperform~\texttt{GUROBI} on all three TQP variations. For the instances with up to $90$ variables, our approach is able to solve all instances within a time limit of 3,600 seconds, whereas~\texttt{GUROBI} fails on a substantial number of them. Also for larger instances, our B\&B algorithm either solves instances to optimality or is able to produce high-quality solutions, with remaining gaps typically below about 2\%. This study shows that our algorithm is currently the best known algorithm for solving nonconvex TQP instances.

This paper is organized as follows. Theoretical results on ternary PSD matrices and their characterizations are discussed in Section~\ref{sect: matrix properties}. In Section~\ref{Sec:ISDP}, we introduce the TQP and reformulate it as an SDP with integrality constraints. Its SDP relaxation and various families of inequalities are derived in Section~\ref{Sect:SDP and cuts}. In Section~\ref{Sec:SpecialCases}, we present three special cases of the TQP. Our B\&B algorithm is introduced in Section~\ref{Sec:BandB}. Finally, in Section~\ref{sect:numResults} numerical results are presented.

\subsection*{Notation}
For $n \in \mathbb{Z}_+$, we define the set $[n] := \{1, \ldots, n\}$.
We denote by $\mathbf{0}_n$ and $\mathbf{1}_n$ the vector of zeros and ones in $\mathbb{R}^n$, respectively.
The $n\times n$ matrix of ones is denoted by $\mathbf{J}_n$. The $n\times n$ identity matrix is denoted by $\mathbf{I}_n$.
We omit the subscripts of these matrices when their sizes are clear from the context. For fixed~$n$, matrices of size $(n+1)\times(n+1)$ 
and vectors of size $(n+1)\times 1$ are indexed  from zero.  
We denote by   $\mathbf{e}_i$  the $i^{\text{th}}$ unit vector. For any $S \subseteq [n]$, $\mathbbm{1}_S$ denotes the indicator vector induced by $S$.
We denote the set of all $n \times n$ real symmetric matrices by  $\mathcal{S}^n$, and  
  by $X_J$ the $|J| \times  |J|$ principal submatrix of $X\in \mathcal{S}^n$ with rows and columns indexed by $J\subseteq [n]$.
The cone of symmetric PSD matrices of order $n$ is defined as $\mathcal{S}^n_+ := \{ {X} \in \mathcal{S}^n \, : \, \, {X} \succeq \mathbf{0} \}$. 
The trace of a square matrix ${X}=(X_{ij})$ is given by $\tr({X})=\sum_{i}X_{ii}$.
 For any ${X},{Y} \in \mathcal{S}^n$ the trace inner product is defined as $\langle {X}, {Y} \rangle := \tr({X} {Y}) = \sum_{i,j = 1}^n X_{ij} Y_{ij}$. 
 For $x\in \mathbb{R}^n$ we denote by $|x|$ the vector whose elements are the absolute values of the corresponding elements of $x$, and by $\mbox{supp}(x) \subseteq [n]$ its support. 
 The operator $\diag : \mathbb{R}^{n \times n} \rightarrow \mathbb{R}^n$ maps a square matrix to a vector consisting of its diagonal elements. We denote by $\Diag : \mathbb{R}^n \rightarrow \mathbb{R}^{n \times n}$ its adjoint operator. 
 We use $\mathrm{conv}\{S\}$ to denote the convex hull of the set $S$.

\section{Characterizations of PSD $\mathbf{\{0,\pm 1\}}-$matrices and related polyhedra}
\label{sect: matrix properties}

In this section, we present three characterizations of ternary PSD matrices, which are then
used to derive novel reformulations of polytopes from the literature \cite{10.1007/978-3-642-13036-6_20,Burer2014UnboundedCS}.
We also relate these polytopes with the well-known Boolean quadric polytope and the  set-completely positive matrix cone.

The following characterization of PSD $\{0,\pm 1\}-$matrices is derived in~\cite{DeMeijerSotirov23}. 
\begin{proposition}[\cite{DeMeijerSotirov23}]
\label{prop:Characterization}
    Let $X \in \{0, \pm 1\}^{n \times n}$ be symmetric. Then, $X \succeq \mathbf{0}$ if and only if $X = \sum_{i = 1}^r x_ix_i^\top$ for some $x_i \in \{0,\pm 1\}^n$, $i \in [r]$. 
\end{proposition}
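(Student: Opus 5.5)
The converse direction is immediate: each $x_ix_i^\top$ is PSD, so their sum is PSD. The work is in the forward direction. Assume $X\in\{0,\pm1\}^{n\times n}$ is symmetric and $X\succeq \mathbf{0}$. The plan is to show that $X$ is, up to a simultaneous permutation of rows and columns, block diagonal. Each block will be of the form $ss^\top$ with $s\in\{\pm1\}^{m}$, and padding these sign vectors with zeros gives the required $x_i$.

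\textbf{Step 1 (diagonal).} Since $X\succeq\mathbf{0}$, every $X_{ii}\in\{0,1\}$. If $X_{ii}=0$, then the $2\times 2$ principal minors force $X_{ij}^2\le X_{ii}X_{jj}=0$, so row and column $i$ vanish. We may therefore discard these indices and assume $\diag(X)=\mathbf{1}$. Define a relation on $[n]$ by $i\sim j$ if and only if $X_{ij}\neq 0$. It is reflexive because $X_{ii}=1$, and symmetric because $X$ is.

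\textbf{Step 2 (transitivity with sign consistency).} This is the key step. Take $i,j,k$ distinct with $a:=X_{ij}\in\{\pm1\}$ and $b:=X_{jk}\in\{\pm1\}$, and put $c:=X_{ik}$. The principal submatrix on $\{i,j,k\}$ is PSD, so its determinant is nonnegative:
\begin{equation*}
1+2abc-a^2-b^2-c^2 \;=\; 2abc-1-c^2 \;\ge\; 0 .
\end{equation*}
If $c=0$, the determinant is $-1<0$. If $c=-ab$, it is $-4<0$. Hence $c=ab$, that is, $X_{ik}=X_{ij}X_{jk}\neq0$. So $\sim$ is transitive, and therefore an equivalence relation. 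Moreover, signs multiply consistently along chains.

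\textbf{Step 3 (assembly).} For each equivalence class $C$, fix a representative $r\in C$ and set $s_i:=X_{ri}\in\{\pm1\}$ for $i\in C$. By Step 2, for $i,j\in C$ we get $X_{ij}=X_{ir}X_{rj}=s_is_j$, using $X_{ir}^2=1$ and symmetry. If $i$ and $j$ lie in different classes, then $X_{ij}=0$ by the definition of $\sim$. Now let $x_C\in\{0,\pm1\}^n$ equal $s$ on $C$ and $0$ elsewhere. Then
\begin{equation*}
X=\sum_{C} x_Cx_C^\top ,
\end{equation*}
where the sum runs over all classes, singletons included. This is the desired decomposition, with $r$ equal to the number of classes.

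The main obstacle is Step 2: this is where positive semidefiniteness is genuinely used beyond $2\times2$ minors. It reduces to the $3\times3$ determinant computation above, which rules out both $c=0$ and the wrong sign $c=-ab$.
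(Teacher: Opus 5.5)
The paper gives no proof of this proposition; it is quoted from \cite{DeMeijerSotirov23}, so there is no in-paper argument to match yours against. Your proof is correct and complete. With $a^2=b^2=1$, the condition $2abc-1-c^2\ge 0$ forces $c=ab$. This single identity gives both the transitivity of $\sim$ and the sign rule $X_{ij}=s_is_j$ inside each class. The degenerate index cases ($i=j$, or $i$ or $j$ equal to $r$) follow from $X_{rr}=1$ and $X_{ir}^2=1$. The discarded zero rows are recovered by padding with zeros.

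The closest argument in the paper is the $(\Leftarrow)$ direction of Theorem~\ref{prop:triangle_ineq}, which builds the same partition $\{K_0,K_1,\dots,K_r\}$. It takes neighbourhoods $\{j : X_{i^*j}\neq 0\}$ of a chosen index $i^*$, playing the role of your representative $r$. It uses the pair inequalities~\eqref{C:eq4} to isolate the zero-diagonal indices, which is your $2\times 2$-minor step. It then uses the triangle inequalities to show that each $K_\ell$ induces a $\{\pm1\}$ block decoupled from the rest. Finally it appeals to Proposition~\ref{prop:characterMC} (Deza--Laurent) to conclude that each block has the form $ss^\top$.

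Your route replaces those linear inequalities with the $3\times 3$ determinant. Because the determinant gives the multiplicative identity $X_{ik}=X_{ij}X_{jk}$ directly, you get the rank-one form of each block at the same time as the block structure. You therefore need no separate characterization of cut matrices. A useful by-product is that your argument uses only principal submatrices of order at most three. Hence a symmetric ternary matrix is PSD if and only if all of its principal submatrices of order $\le 3$ are PSD. This is why Theorem~\ref{prop:triangle_ineq} can be stated entirely in terms of inequalities on at most three indices.
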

Observe  that the vectors $x_i$ in the above proposition have disjoint supports. There exists a similar decomposition  of PSD  $\{0, 1\}-$matrices~\cite{Letchford2008BinaryPS}. 
The following  characterization  of ternary PSD matrices follows directly from the previous proposition. 
\begin{corollary}
A symmetric ternary matrix is positive semidefinite if and only if it equals $TT^\top$    for some ternary matrix $T$.
\end{corollary}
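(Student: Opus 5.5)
The plan is to read the corollary off Proposition~\ref{prop:Characterization} by packing the rank-one terms into the columns of a single matrix. The two directions are handled separately.

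For the forward direction, let $X \in \{0,\pm 1\}^{n\times n}$ be symmetric with $X \succeq \mathbf{0}$. Proposition~\ref{prop:Characterization} gives vectors $x_1,\dots,x_r \in \{0,\pm1\}^n$ with $X = \sum_{i=1}^r x_i x_i^\top$. Let $T \in \{0,\pm1\}^{n\times r}$ be the matrix whose $i$-th column is $x_i$. The column-by-row expansion of the product gives
\[
TT^\top = \sum_{i=1}^r T\mathbf{e}_i \mathbf{e}_i^\top T^\top = \sum_{i=1}^r x_i x_i^\top = X,
\]
and $T$ is ternary by construction. If $r=0$, i.e.\ $X=\mathbf{0}$, we may take $T=\mathbf{0}_{n\times 1}$, or equivalently add a zero column.

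For the reverse direction, suppose $X = TT^\top$ for some ternary matrix $T$. Then $X$ is symmetric, and for every $y\in\R^n$ we have $y^\top X y = \|T^\top y\|^2 \ge 0$, so $X\succeq \mathbf{0}$. Since the corollary already assumes $X$ is a symmetric ternary matrix, this is all that is required.

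No step is a genuine obstacle. The only point to address is the number of columns of $T$: the statement does not insist that $T$ be square, so $T$ is $n\times r$. If a square $T$ is wanted, one can note that the $x_i$ have disjoint supports (as the paper observes), so the nonzero ones number at most $n$. Discarding any zero $x_i$ and padding with zero columns up to $n$ then yields an $n\times n$ ternary $T$.
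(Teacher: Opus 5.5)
Your proof is correct and matches the paper's approach: the paper simply says the corollary follows directly from Proposition~\ref{prop:Characterization}, and you make that explicit by taking the vectors $x_i$ as the columns of $T$, with the converse being the trivial fact that $TT^\top \succeq \mathbf{0}$. Your remark on padding to a square $T$ is also sound, since $X_{jj}=\sum_i (x_i)_j^2 \le 1$ forces any such decomposition to have disjoint supports.
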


The following result from~\cite{DeMeijerSotirov23} relates a PSD $\{0,\pm 1\}-$matrix of rank one with an extended linear matrix inequality.
\begin{proposition}[\cite{DeMeijerSotirov23}]
\label{prop:Rank1}
Let $Y = \begin{pmatrix} 1 &  x^\top  \\ {x} & {X}\end{pmatrix} \in {\mathcal S}^{n+1}$  with $\diag(X)=|x|$.
Then,
 $Y \in \{0,\pm 1\}^{(n+1)\times (n+1)}$, $Y \succeq {\mathbf 0}$ if and only if $X=xx^\top.$
\end{proposition}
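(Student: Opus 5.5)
The proof splits into the two implications.

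\emph{Sufficiency.} Suppose $X = xx^\top$. Since $\diag(X)=|x|$, we get $x_i^2 = |x_i|$ for every $i$. Hence each $x_i \in \{0,\pm 1\}$, i.e., $x$ is ternary. Then $Y = (1;x)(1;x)^\top$ is a rank-one ternary matrix, since all its entries are products of elements of $\{0,\pm1\}$. It is PSD because it is a Gram matrix. This direction is routine.

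\emph{Necessity.} Assume $Y$ is ternary and PSD. By Proposition~\ref{prop:Characterization}, write $Y = \sum_{k=1}^r y_k y_k^\top$ with $y_k \in \{0,\pm1\}^{n+1}$.

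The diagonal of $\sum_k y_ky_k^\top$ at index $i$ counts the number of $k$ with $(y_k)_i \neq 0$. Since $Y_{ii} \in \{0,1\}$, at most one $y_k$ is nonzero in each coordinate, so the supports are disjoint. Because $Y_{00}=1$, exactly one vector, say $y_1$, has $(y_1)_0 \neq 0$. Replacing $y_1$ by $-y_1$ if needed, we may take $(y_1)_0 = 1$.

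Disjointness of supports gives $x = Y_{1:n,0} = \sum_k y_k (y_k)_0 = (y_1)_{1:n} =: z$. For every coordinate $i$ in the support of some $y_k$ with $k\ge 2$, we have $x_i = 0$. But then $X_{ii} = 1$, while $|x_i| = 0$, which contradicts $\diag(X)=|x|$. Hence $y_k = 0$ for all $k \ge 2$, so $Y = y_1y_1^\top$ and therefore $X = zz^\top = xx^\top$.

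The only delicate point is the support-disjointness argument, which turns the diagonal condition into a rank-one conclusion. It also follows from the remark after Proposition~\ref{prop:Characterization}.

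An alternative direct route avoids Proposition~\ref{prop:Characterization}. Consider the $2\times2$ principal minors $\begin{pmatrix}1 & x_i\\ x_i & X_{ii}\end{pmatrix}$. Together with $X_{ii}=|x_i|$, these are consistent. Then the Schur complement $X - xx^\top \succeq 0$ has diagonal $|x| - x^2 = 0$, since $x$ is ternary. A PSD matrix with zero diagonal is zero, so $X = xx^\top$ immediately. I would present this Schur-complement argument as the main proof, since it is shorter; the only check needed is that $x_i\in\{0,\pm1\}$ gives $x_i^2=|x_i|$.
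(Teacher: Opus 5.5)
Your proof is correct in both directions. The paper does not prove this proposition itself; it quotes it from \cite{DeMeijerSotirov23}. Right after \eqref{ternaryProblemISDP}, however, it gives the relevant mechanism: for $\mathcal I=\{0,i,j\}$ and $X_{ii}=|x_i|=x_i^2$ one has $\det(Y_{\mathcal I})=-(x_ix_j-X_{ij})^2\ge 0$, which forces $X_{ij}=x_ix_j$.

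Your Schur-complement argument is the global form of exactly this computation. The $3\times 3$ minor $\det(Y_{\mathcal I})$ equals the $2\times 2$ principal minor of $X-xx^\top$ on $\{i,j\}$, and that matrix has zero diagonal. Like the paper's argument, yours uses integrality of $x$ only. That is precisely why the paper can weaken $Y\in\{0,\pm1\}^{(n+1)\times(n+1)}$ to $x\in\{0,\pm1\}^n$ in \eqref{ternaryProblemISDP} without losing exactness.

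Your first route, via \Cref{prop:Characterization} and disjoint supports, is also valid but heavier. It genuinely needs all of $Y$ to be ternary in order to invoke the decomposition, so it cannot give that relaxation. Making the Schur-complement version your main proof is the better choice. The sentence saying the $2\times 2$ minors are ``consistent'' adds nothing and can be dropped.
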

One can also replace    $\diag(X)=|x|$ by $\mbox{supp}(\diag(X))= \mbox{supp}(x)$ in \Cref{prop:Rank1}.

In the sequel, we first present known results that characterize PSD matrices with entries in $\{\pm1\}$ and $\{0,1\}$ by using inequalities, and then we derive a similar result for PSD $\{0,\pm 1\}-$matrices.
The following well-known result characterizes PSD $\{\pm 1 \}-$matrices using the triangle inequalities and a diagonal constraint.
\begin{proposition}[\cite{DezaLaurent1997}] \label{prop:characterMC}
    Let $X \in \{\pm 1\}^{n \times n}$ be symmetric. Then, $X = xx^\top$ for some $x \in \{\pm 1\}^n$ if and only if $X$ satisfies 
    \begin{align*} 
    X_{ij} +  X_{ik} + X_{jk} & \geq -1, \quad 
     -X_{ij} + X_{ik} - X_{jk}   \geq -1  & &  \forall 1\leq i < j < k \leq n  \\ 
   X_{ij} - X_{ik} - X_{jk}    & \geq -1, \quad 
     -X_{ij} - X_{ik} + X_{jk}   \geq -1 &&   \forall  1\leq i < j < k \leq n \\ 
        X_{ii} & = 1 && \forall i \in [n].
        \end{align*}         
\end{proposition}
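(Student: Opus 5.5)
The forward direction is a direct check. If $X = xx^\top$ with $x \in \{\pm 1\}^n$, then $X_{ii} = x_i^2 = 1$. For any triple $i<j<k$ the four inequalities read $x_ix_j + x_ix_k + x_jx_k \geq -1$ together with the three versions obtained by flipping the sign of one of $x_i,x_j,x_k$. Substituting $y=(x_i,x_j,x_k)$ with the appropriate sign changes, each reduces to
\[
y_1y_2+y_1y_3+y_2y_3 = \tfrac{1}{2}\bigl((y_1+y_2+y_3)^2 - 3\bigr) \geq -1 \quad \text{for } y \in \{\pm 1\}^3,
\]
which holds because $y_1+y_2+y_3$ is odd, so its square is at least $1$.

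For the converse, let $X \in \{\pm1\}^{n\times n}$ be symmetric and satisfy the constraints. I will construct $x$ explicitly by setting $x_1 := 1$ and $x_j := X_{1j}$ for $j \in [n]$. Then $X_{11}=1=x_1^2$, $X_{1j} = x_1x_j$, and symmetry handles $X_{j1}$. It remains to show $X_{jk} = X_{1j}X_{1k}$ for $2\le j<k\le n$.

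Fix such a pair $j<k$ and consider the triple $(1,j,k)$. Here $a:=X_{1j}$, $b:=X_{1k}$, $c:=X_{jk}$ all lie in $\{\pm1\}$, and the four triangle inequalities say that $a+b+c$, $-a+b-c$, $a-b-c$ and $-a-b+c$ are all at least $-1$. Because the entries are $\pm 1$, each of these sums lies in $\{-3,-1,1,3\}$, so the inequalities exactly exclude a sum of $-3$. This rules out four sign patterns:
\begin{itemize}
\item $(a,b,c)=(-1,-1,-1)$,
\item $(1,-1,1)$,
\item $(-1,1,1)$,
\item $(1,1,-1)$.
\end{itemize}
These are precisely the patterns with $abc=-1$. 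Hence $abc = 1$, i.e.\ $c = ab$, which is the required identity $X_{jk}=X_{1j}X_{1k}$. The diagonal entries $X_{jj}=1=x_j^2$ are given directly. Therefore $X = xx^\top$.

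The indexing in the statement uses $i<j<k$, while I need triples containing index $1$ with the other two indices in arbitrary order. Since $X$ is symmetric and the set of four inequalities is invariant under permuting the three indices (it is the set of all even-sign-flip versions), this causes no difficulty. Verifying that invariance, and the enumeration of the excluded sign patterns above, is the only step requiring care; it is a small finite check rather than a real obstacle.
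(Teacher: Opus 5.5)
Your proof is correct. The forward direction via $y_1y_2+y_1y_3+y_2y_3=\tfrac12\bigl((y_1+y_2+y_3)^2-3\bigr)\ge -1$ is complete. So is the converse, where the four inequalities on the triple $(1,j,k)$ exclude exactly the four sign patterns with $X_{1j}X_{1k}X_{jk}=-1$; the cases $n\le 2$ are covered trivially.

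The paper gives no proof of this statement; it is quoted from \cite{DezaLaurent1997}. Your anchor-index argument is, however, the same technique the paper uses in the converse of Theorem~\ref{prop:triangle_ineq}. There an index $i^*$ is fixed and triangle inequalities on triples $(i^*,j,k)$ pin down the block structure. Your argument would even close that step directly, instead of invoking this proposition on each block.

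The route usually taken in the cuts-and-metrics literature is different. Substituting $X=\mathbf{J}-2D$ turns the four inequalities into the triangle inequalities $D_{ik}\le D_{ij}+D_{jk}$ and the perimeter inequality $D_{ij}+D_{ik}+D_{jk}\le 2$ for a $\{0,1\}$-valued $D$ with zero diagonal. Hence ``distance zero'' is an equivalence relation with at most two classes $S$ and $[n]\setminus S$, and $X=xx^\top$ with $x=\mathbbm{1}_S-\mathbbm{1}_{[n]\setminus S}$. That version places the result inside the metric/cut polytope picture, while yours is shorter and self-contained.

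One small remark: your last paragraph is unnecessary. For $2\le j<k$ the triple $(1,j,k)$ already satisfies $1<j<k$, and entries $X_{jk}$ with $j>k$ follow from symmetry.
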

One can replace the rank-one condition from  \Cref{prop:characterMC} by the positive semidefiniteness constraint. Namely, for a symmetric matrix $X$, we have that $X\succeq \mathbf{0}$, $X \in \{\pm 1\}^{n \times n}$ if and only if $X = xx^\top$ for some $x \in \{\pm 1\}^n$ \cite{GoemansWilliamson}.

A characterization  of PSD  $\{0, 1\}-$matrices  in terms of inequalities is given below.
\begin{proposition}[\cite{Letchford2008BinaryPS}]
Let $X \in \{0,1\}^{n\times n}$ be symmetric with $n\geq 3$.
Then, $X \succeq \mathbf{0}$ if and only if $X$ satisfies
\begin{align}
X_{ik} + X_{jk} &\leq X_{kk}+X_{ij} && \forall  1\leq i < j \leq n, \,\, k\neq i,j  \nonumber \\
X_{ij}&\leq X_{ii}  &&  \forall  1\leq i < j \leq n.  \nonumber 
\end{align}
\end{proposition}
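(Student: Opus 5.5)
Necessity and sufficiency can be handled separately, with sufficiency reduced to the earlier decomposition in \Cref{prop:Characterization} by means of a combinatorial description of $X$. Necessity is elementary. If $X\succeq \mathbf{0}$, then every $2\times 2$ principal minor on rows and columns $\{i,j\}$ is nonnegative, so $X_{ij}^2\le X_{ii}X_{jj}$. Since the entries are binary, $X_{ij}=1$ forces $X_{ii}=X_{jj}=1$, which gives $X_{ij}\le X_{ii}$. For the first family, take $v=\mathbf{e}_i+\mathbf{e}_j-\mathbf{e}_k$. Then
\[
0\le v^\top X v = X_{ii}+X_{jj}+X_{kk}+2X_{ij}-2X_{ik}-2X_{jk}.
\]
This is weaker than the required inequality, so instead I would argue by cases on the binary values. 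Suppose $X_{ik}+X_{jk}>X_{kk}+X_{ij}$. Because of $X_{ik}\le X_{kk}$, the only possibility is $X_{ik}=X_{jk}=X_{kk}=1$ and $X_{ij}=0$. The principal minor on $\{i,j,k\}$ then equals
\[
\begin{pmatrix} a&0&1\\0&b&1\\1&1&1\end{pmatrix},
\]
where $a=X_{ii}=1$ and $b=X_{jj}=1$ by the second family. Its determinant is $ab-a-b=-1<0$, a contradiction.

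For sufficiency, I would assume both inequality families hold and show that $X$ is, up to zero rows, a block-diagonal matrix of all-ones blocks. Let $V=\{i: X_{ii}=1\}$. By $X_{ij}\le X_{ii}$, every row and column outside $V$ is zero. On $V$, define $i\sim j$ if $X_{ij}=1$. This relation is reflexive on $V$ and symmetric. The first family with $X_{kk}=1$ gives $X_{ik}=X_{jk}=1\Rightarrow X_{ij}=1$, which is transitivity. Hence $\sim$ is an equivalence relation with classes $S_1,\dots,S_r$, and
\[
X=\sum_{t=1}^r \mathbbm{1}_{S_t}\mathbbm{1}_{S_t}^\top ,
\]
which is PSD, consistent with the binary analogue of \Cref{prop:Characterization}.

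I expect the main obstacle to be the careful bookkeeping of index conditions and the role of $n\ge 3$. The inequalities are stated only for $i<j$ and $k\ne i,j$, so transitivity must be checked when indices coincide. Those cases are trivial, since $i\sim i$ holds on $V$. The assumption $n\ge 3$ merely ensures that the triangle family is nonvacuous in the intended sense. For $n\le 2$, the second family alone suffices, by the same $2\times 2$ argument.
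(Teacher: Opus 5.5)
Your necessity argument via the $2\times 2$ and $3\times 3$ principal minors is correct. Your equivalence-relation argument for sufficiency is the binary counterpart of the block construction $K_1,K_2,\dots$ in the paper's proof of \Cref{prop:triangle_ineq}; the paper itself only cites this proposition and gives no proof.

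\textbf{The gap.} It is in the sentence ``By $X_{ij}\le X_{ii}$, every row and column outside $V$ is zero.'' As stated, the second family is imposed only for $i<j$. It therefore gives $X_{ij}\le X_{ii}$ but \emph{not} $X_{ij}\le X_{jj}$ for $i<j$. For example, nothing in that family forces $X_{in}=0$ when $X_{nn}=0$. In the necessity direction this is harmless, because you derive $X_{ij}^2\le X_{ii}X_{jj}$ for all pairs from the minors. In the sufficiency direction, however, you may only use the hypotheses.

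\textbf{The repair.} The missing inequality must come from the first family, and this is exactly where $n\ge 3$ is needed. Suppose $X_{ij}=1$ and $X_{jj}=0$ for some $i\neq j$, and pick $m\notin\{i,j\}$.
\begin{itemize}
\item The first family with $k=j$ on the pair $\{i,m\}$ gives $X_{ij}+X_{mj}\le X_{jj}+X_{im}$, i.e.\ $1+X_{mj}\le X_{im}$. Hence $X_{im}=1$ and $X_{mj}=0$.
\item With $k=i$ on the pair $\{j,m\}$ it gives $X_{ij}+X_{im}\le X_{ii}+X_{jm}$, i.e.\ $2\le X_{ii}\le 1$, a contradiction.
\end{itemize}
Thus $X_{ij}\le\min\{X_{ii},X_{jj}\}$ for all $i\neq j$, and the rest of your proof goes through unchanged.

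\textbf{Your closing remark.} For the statement as written, it is wrong. The role of $n\ge 3$ is not mere non-vacuity, and for $n=2$ the second family alone does not suffice. Indeed, $X=\begin{pmatrix}1&1\\1&0\end{pmatrix}$ satisfies $X_{12}\le X_{11}$ but is not PSD. Your remark, and your proof without the repair, are correct only if the second family is read over all ordered pairs $i\neq j$, as in \eqref{C:eq4}. Under that reading the hypothesis $n\ge 3$ is in fact superfluous.
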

For the case of ternary PSD matrices, we derive the following result.
\begin{theorem}\label{prop:triangle_ineq}
    Let $X\in\{0, \pm 1\}^{n\times n}$ be symmetric with $n\geq 3$. Then, $X \succeq \mathbf{0}$ if and only if $X$ satisfies 
    \begin{align}
     X_{ij} +  X_{ik} + X_{jk} & \geq -1, \quad 
     -X_{ij} + X_{ik} - X_{jk}   \geq -1  & &  \forall 1\leq i < j < k \leq n  \label{C:eq1} \\
   X_{ij} - X_{ik} - X_{jk}    & \geq -1, \quad 
     -X_{ij} - X_{ik} + X_{jk}   \geq -1 &&   \forall  1\leq i < j < k \leq n \label{C:eq2.3}\\
        X_{ij} &\leq X_{ii}, \quad   X_{ij} \geq - X_{ii} && \forall i, j \in [n], ~i \neq j. \label{C:eq4}
    \end{align}
\end{theorem}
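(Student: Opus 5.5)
For necessity, I will invoke \Cref{prop:Characterization} and write $X=\sum_{l=1}^r x_lx_l^\top$ with $x_l\in\{0,\pm1\}^n$. Since $X$ itself is ternary, the supports of the $x_l$ are pairwise disjoint, as observed after that proposition; otherwise some entry of $X$ would have absolute value at least $2$, or an off-diagonal zero would force a sign cancellation that is incompatible with a diagonal entry of $X$ being at most $1$. In particular $X_{ij}=x_{l,i}x_{l,j}$ whenever $i,j$ lie in the same support, and $X_{ij}=0$ otherwise. For \eqref{C:eq4}: if $X_{ii}=0$ then $i$ lies in no support and $X_{ij}=0$; if $X_{ii}=1$ then $|X_{ij}|\le 1$. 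For the triangle inequalities \eqref{C:eq1}--\eqref{C:eq2.3} I will split on how many of $i,j,k$ share a support. If all three share one, the $3\times3$ principal submatrix is $\pm1$-valued of rank one, and the inequalities follow from \Cref{prop:characterMC}. Otherwise at most one of $X_{ij},X_{ik},X_{jk}$ is nonzero, so each left-hand side is at least $-1$.

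For sufficiency, I will build a decomposition from the inequalities and then apply \Cref{prop:Characterization}. Let $S=\{i: X_{ii}=1\}$. By \eqref{C:eq4}, every row and column indexed outside $S$ vanishes, so it suffices to show $X_S\succeq \mathbf 0$. On $S$, define $i\sim j$ iff $X_{ij}\neq 0$, with $i\sim i$. The key step is transitivity. Suppose $X_{ij},X_{jk}\in\{\pm1\}$ and $X_{ik}=0$. The four triangle inequalities say that $\sigma_1X_{ij}+\sigma_2X_{ik}+\sigma_3X_{jk}\ge -1$ for all sign patterns with $\sigma_1\sigma_2\sigma_3=1$, and these patterns cover every sign combination on the pair $(X_{ij},X_{jk})$. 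Choosing the pattern that makes both nonzero terms equal to $-1$ gives $-2\ge -1$, a contradiction. So $\sim$ is an equivalence relation, and $X_S$ is block diagonal, up to permutation, with $\pm1$-valued blocks having unit diagonal.

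On each block, the restricted triangle inequalities together with $X_{ii}=1$ are exactly the hypotheses of \Cref{prop:characterMC}, so each block equals $xx^\top$ with $x\in\{\pm1\}^{|B|}$. Hence $X$ is a sum of rank-one ternary matrices with disjoint supports, and is therefore PSD.

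The main obstacle is the transitivity argument, specifically checking that the four triangle inequalities really cover every sign case of a path with a missing edge. I will verify this by enumerating the four sign choices for $(X_{ij},X_{jk})$ directly, taking care over the index ordering $i<j<k$ when the middle vertex of the path is not $j$. By symmetry of the inequality family under relabeling, the ordering should not matter, but I will state this explicitly. Note that $n\ge3$ is only needed so that the triangle inequalities are nonvacuous; for blocks of size at most $2$, the rank-one conclusion is immediate.
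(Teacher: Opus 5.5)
Your proposal is correct and follows essentially the same route as the paper. Necessity uses the disjoint-support decomposition from \Cref{prop:Characterization} with the same case split, and sufficiency uses the triangle inequalities to forbid a triangle with exactly two nonzero entries, which gives the block structure to which \Cref{prop:characterMC} is applied blockwise. Your equivalence-relation packaging is a tidier version of the paper's iterative construction of $K_1, K_2, \dots$ from pivot rows, and disjointness of supports follows most simply from $X_{ii}=\sum_l x_{l,i}^2\le 1$.
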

\begin{proof}
$(\Rightarrow)$: Let $X \succeq \mathbf{0}$. Then, by Proposition~\ref{prop:Characterization} it follows that there exist $x_i \in \{0,\pm 1\}^n$ for $i \in [r]$ such that $X = \sum_{i =1}^r x_ix_i^\top$. 
Let $K_i = \mbox{supp}(x_i)$ for all $i \in [r]$ and define $K_0 = [n] \setminus (\bigcup_{i \in [r]}K_i)$. Since the vectors $x_i$ have disjoint support, $\{K_0, K_1, \ldots, K_r\}$ forms a partition of $[n]$. 

It follows from Proposition~\ref{prop:characterMC} that for each $i, j, k$ within the same set $K_\ell$ of the partition, the inequalities~\eqref{C:eq1}--\eqref{C:eq2.3} are satisfied. 
Now, let us assume that $i$ and $j$ are in the same subset of the partition, while $k$ is in a different subset. Then, $X_{ik} = X_{jk} = 0$, while $X_{ij}$ can be either $1$ or $-1$. One can easily check that~\eqref{C:eq1}--\eqref{C:eq2.3} are satisfied. Finally, if $i, j$ and $k$ are all in separate subsets of the partition, then $X_{ij} = X_{ik} = X_{jk} = 0$, which also satisfy~\eqref{C:eq1}--\eqref{C:eq2.3}. 

Let us now consider the inequalities~\eqref{C:eq4}. Since $X \succeq \mathbf{0}$, each diagonal element $X_{ii}$ is either $0$ or $1$. If $X_{ii} = 1$, then~\eqref{C:eq4} is clearly satisfied for all $j \in [n]$. If $X_{ii} = 0$, then $X_{ij} = 0$ for all $j \in [n]$ due to the fact that $X$ is PSD and thus,~\eqref{C:eq4} are also satisfied. We conclude that $X$ satisfies the inequalities~\eqref{C:eq1}--\eqref{C:eq4}.

$(\Leftarrow):$ Now, assume that $X$ satisfies~\eqref{C:eq1}--\eqref{C:eq4}. From~\eqref{C:eq4} it follows that $X_{ii} \in \{0,1\}$ for all $i \in [n]$. Let $K_0 := \{i \in [n] \, : \, \, X_{ii} = 0\}$. It follows from~\eqref{C:eq4} that $X_{ij} = X_{ji} = 0$ for  $i \in K_0$ and $j \in [n]$. Now, take some $i^* \in [n] \setminus K_0$ and define $K_1 := \{j \in [n] \, : \, \, X_{i^*j} \neq 0\}$. We show that the submatrix of $X$ induced by $K_1$ has entries in $\{\pm 1\}$. Clearly, this submatrix cannot have zeros on the diagonal. Now, suppose $X_{jk} = 0$ for some $j, k \in K_1$. Since $X_{i^* j}, X_{i^* k} \in \{\pm 1\}$, one can verify that at least one of the triangle inequalities is violated in case $X_{j k} = 0$. Thus, the submatrix of $X$ induced by $K_1$ contains only entries in $\{\pm 1\}$. 

Next, we show that $X_{jk} = 0$ for all $j \in K_1$, $k \notin K_1$. It follows from the definition of $K_1$ that $X_{i^* k} = 0$. Moreover, as argued above, we must have $X_{i^* j} \in \{\pm 1\}$. Suppose $X_{i^* j} = 1$. Then, $X_{jk}$ cannot be $1$, since then
 $   -X_{i^* j}  + X_{i^*k} - X_{jk} = -1 + 0 - 1 < -1,$
which violates one of the triangle inequalities~\eqref{C:eq1}--\eqref{C:eq2.3}.
Similarly, $X_{jk}$ cannot be $-1$, since then 
 $ -X_{i^* j} - X_{i^* k} + X_{jk} = -1 - 0 + (-1) < -1,$
also violating the triangle inequalities.
Thus, $X_{jk} = 0$. The proof for the case $X_{i^* j} = -1$ is similar. We conclude that $X_{jk} = 0$ for all $j \in K_1$, $k \notin K_1$. 

Now, we take $j^* \in [n] \setminus (K_0 \cup K_1)$ and define $K_2 := \{ j \in [n] \, : \, \, X_{j^* j} \neq 0\}$. By the same arguments as above, it follows that the submatrix of $X$ induced by $K_2$ has entries in $\{\pm 1\}$ and that $X_{ij} = 0$ for all $i \in K_2$, $j \notin K_2$. By iteratively repeating this argument, we obtain a partition $\{K_0, K_1, \ldots, K_r\}$ of $[n]$ that defines a non-overlapping block structure in $X$, where $K_0$ corresponds to a block of zeros and each $K_\ell$, $\ell \neq 0$, defines a block with entries in $\{\pm 1\}$. 
It follows from Proposition~\ref{prop:characterMC} that there exist $x_i \in \{0, \pm 1\}^n$ for $i \in [r]$ such that $X = \sum_{i=1}^r x_ix_i^\top$, hence $X \succeq \mathbf{0}$.
\end{proof}
The inequalities from \Cref{prop:triangle_ineq} can be used to strengthen SDP relaxations of TQPs.


\begin{remark}
Similar to the binary PSD matrices~\cite{Letchford2008BinaryPS}, the ternary PSD matrices have a graphical representation.
Let $X$ be a $n\times n$ rank-one ternary PSD and let $S = \{ i\in [n] \,:\, X_{ii} = 1 \}$.
Then, the submatrix of $X$ induced by $S$
corresponds to the edge cut-set of the subgraph of the complete graph of order $n$ induced by $S$. If $\rank(X) > 1$, then $X$ is the direct sum of $r$ non-overlapping edge cut-set matrices.
\end{remark}
In what follows we  exploit characterizations of PSD  $\{0,\pm 1\}$-matrices to derive equivalent formulations of polytopes from the literature.
Letchford~\cite{10.1007/978-3-642-13036-6_20} introduced and studied the sets 
$$
F_n := \left \{  (x,y) \in  \Z^{n + \binom{n+1}{2} } \, : \, y_{ij} = x_i x_j  \mbox{  for  }  1\leq i \leq j \leq n  \right \},
$$
and  $\mbox{IQ}_n := \mathrm{clconv} \left \{  F_n \right \}$ for $n\in \mathbb{N}$. 
Here $\mathrm{clconv}\{\cdot\}$ denotes the closure of the  convex hull of $F_n$. The set  $\mbox{IQ}_n$ is  not a polyhedron since it has a countably infinite number of faces. Such sets are known as  quasi-polyhedra. It was proven in~\cite{10.1007/978-3-642-13036-6_20} that every point in  $F_n$ is an extreme point of $\mbox{IQ}_n$. 
Moreover, it is not difficult to verify that
$$
F_n = \left \{
X^+ = \begin{pmatrix}
    1 & x^\top \\
    x & X
\end{pmatrix} \,:\, X^+ \succeq \mathbf{0},  \,\,
\rank(X^+) = 1, \,\, x \in \Z^n
\right \}.
$$
Burer and Letchford~\cite{Burer2014UnboundedCS} proved that minimizing a linear function over $\mbox{IQ}_n$ is $\mathcal{NP}$-hard in the strong sense. 
If there exists  $u\in \Z_+$ such that $-u \mathbf{1}_n \leq x \leq u\mathbf{1}_n$ for all $(x,y) \in F_n$, then one can  define  the following bounded sets:
\begin{align*}
F^u_n := \left \{  (x,y) \in  \Z^{n + \binom{n+1}{2} }  \,:\,  y_{ij} = x_i x_j \,\,  
\mbox{  for  }
1\leq i \leq j \leq n, 
\,\, x_i \in \{-u,\ldots, u\} \,\, i \in [n] \right \}
\end{align*}
and  $\mbox{IQ}^u_n := \mathrm{conv}\left \{ F^u_n \right \}$. The set $\mbox{IQ}^u_n$ is a polytope.
The sets $\mbox{IQ}_n$ and  $\mbox{IQ}^u_n$ are studied in   \cite{Buchheim2015OnTS,Burer2014UnboundedCS,GALLI2021100661,10.1007/978-3-642-13036-6_20}.
For the case $u=1$, we define
\begin{align} \label{F1n}
\mathcal{F}_n^1 := \left \{ 
X \in \{0, \pm 1\}^{n\times n}
\,:\,
\begin{pmatrix} 1 &  x^\top  \\ {x} & {X}\end{pmatrix}  \succeq \mathbf{0}, \,\   \diag(X)=|x|
\right \}.
\end{align}
Note that, apart from the embedding, $\mathcal{F}_n^1$ coincides with $F_n^1$.
Elements of the set $\mathcal{F}_n^1$ are neither  described by a nonconvex rank constraint 
nor by a product of variables.  
The cardinality of $\mathcal{F}_n^1$ is $1+\sum_{k=1}^n\binom{n}{k}2^{k-1}$, see~\cite{DeMeijerSotirov25} for the enumeration of its vertices.
Let us consider the following two polytopes:
\begin{align} \label{IQ1n}
\mathcal{IQ}^1_n & :=\mathrm{conv} \left \{ \mathcal{F}_n^1 \right \}, \\
\mathcal{M}_n & :=  \left \{ X\in \sn \, : \, \eqref{C:eq1}-\eqref{C:eq4}, \,\, X_{ii}\leq 1 \mbox{ for }  i\in [n]  \right \}. \nonumber
\end{align}
It follows from Proposition~\ref{prop:Rank1} that $\mathcal{IQ}^1_n$ is the convex hull of all rank-one ternary PSD matrices. One can easily show that ternary PSD matrices of arbitrary rank are also contained in $\mathcal{IQ}^1_n$. For example, any ternary PSD matrix $X$ of rank $2$ can be decomposed as follows: Since $X = x_1 x_1^\top + x_2 x_2^\top$ for some $x_1, x_2 \in \{0,\pm 1 \}^n$, we define $S_i := \{j : (x_i)_j = 1\}$ and $T_i := \{i : (x_i)_j = -1\}$ for $i \in \{1,2\}$. Then, we have
\begin{align*}
    X  =  \frac{1}{2}(\mathbbm{1}_{S_1 \cup S_2} - \mathbbm{1}_{T_1 \cup T_2}) (\mathbbm{1}_{S_1 \cup S_2} - \mathbbm{1}_{T_1 \cup T_2})^\top \hspace{-0.15cm}
    + \frac{1}{2}(\mathbbm{1}_{S_1 \cup T_2} - \mathbbm{1}_{S_2 \cup T_1}) (\mathbbm{1}_{S_1 \cup T_2} - \mathbbm{1}_{S_2 \cup T_1})^\top 
\end{align*} 
showing that $X \in \mathcal{IQ}^1_n$. A similar construction holds for higher rank matrices.

The set $\mathcal{M}_n$ includes constraints   \eqref{C:eq1}--\eqref{C:eq4}, and additionally upper bounds on
the diagonal elements. Note that without those upper bounds, the corresponding set would be unbounded.
Intrigued by the well-known result that relates the metric polytope and the cut polytope, 
we study the relation between $\mathcal{IQ}^1_n $ and $\mathcal{M}_n$.
Recall that the metric polytope is 
the set of all symmetric matrices with ones on the diagonal that satisfy the triangle inequalities
\eqref{C:eq1}--\eqref{C:eq2.3}, 
and the cut polytope  is $\mathrm{conv}\{ X\in \sn : X=yy^\top, \,y\in \{\pm 1\} \}$.
The well-known result that relates these two polytopes is that the metric polytope coincides with the cut polytope for $n\in \{3, 4\}$, while the cut polytope is strictly contained in the metric polytope 
for $n\geq 5$, see e.g.,~\cite{DezaLaurent1997}.

Next, we  show that $\mathcal{IQ}^1_n \subset\mathcal{M}_n$ for $n=3$.
It is not difficult to verify  that $\mathcal{IQ}^1_3 \subseteq \mathcal{M}_3$. One can, for example enumerate all  14 extreme points of $\mathcal{F}_3^1$ and verify that points from $\mathrm{conv}\{ \mathcal{F}_3^1\}$ are also in $\mathcal{M}_3$.
To show that  the inclusion is strict, we consider the matrix
$ \tfrac{1}{3}
\begin{psmallmatrix}
      1 & 1 & -1 \\[0.7ex]
  1 & 1 & 1 \\[0.7ex]
 -1 & 1 & 3
\end{psmallmatrix} \in \mathcal{M}_3,
$
that is  one of the vertices of~$\mathcal{M}_3$.
This matrix is not positive semidefinite and it is therefore not in  $\mathcal{IQ}^1_3$.

\medskip
Next, we consider an extended formulation of the polytope $\mathcal{IQ}^1_n$, i.e., we write $\mathcal{IQ}^1_n$ as a linear projection of a lifted polytope in a higher-dimensional space. To do so, we start from the observation that any ternary vector $x \in \{0, \pm 1\}^n$ can be decomposed as $x = x' - x''$, where $x', x'' \in \{0,1\}^n$. This decomposition is unique if we additionally require that $x' + x'' \leq \mathbf{1}_n$.
Exploiting the decomposition, the ternary rank-one PSD matrix $xx^\top$ can be written as
\begin{align} \label{eq:extended_eq1}
    xx^\top = (x' - x'')(x' - x'')^\top = x'(x')^\top + x''(x'')^\top - x'(x'')^\top - x''(x')^\top.
\end{align} 
Now, consider the extended rank-one matrix $Z$ that is defined as
\begin{align} \label{eq:extended_eq2}
    Z = \begin{pmatrix}
        x' \\ x'' 
    \end{pmatrix}\begin{pmatrix}
        x' \\ x'' 
    \end{pmatrix}^\top = \begin{pmatrix}
        x'x'^\top & x'(x'')^\top \\
        x''(x')^\top & x''(x'')^\top 
    \end{pmatrix} =: \begin{pmatrix}
        Z^{11} & Z^{12} \\ Z^{21} & Z^{22}
    \end{pmatrix}. 
\end{align} 
Combining~\eqref{eq:extended_eq1} and~\eqref{eq:extended_eq2}, it follows that $xx^\top = Z^{11} + Z^{22} - Z^{12} - Z^{21}$. The discrete set $\mathcal{F}^1_n$ can therefore be written as the linear projection of a discrete set in $(2n+1)$-dimensional space. {Let the lifting $\mathcal{L}$ be defined as
\begin{align*}
   \mathcal{L}(z_1, z_2, Z^{11}, Z^{12}, Z^{21}, Z^{22}) := \begin{pmatrix}
            1 & z_1^\top & z_2^\top\\
            z_1 & Z^{11} & Z^{12} \\
            z_2 & Z^{21} & Z^{22}
        \end{pmatrix}.
\end{align*}
Then,}
        \begin{align} \label{eq:F1n-binary}
        \mathcal{F}^1_n = \left\{Z^{11} + Z^{22} - Z^{12} - Z^{21}~:~~ \begin{aligned} {\mathcal{L}(z_1, z_2, Z^{11}, Z^{12}, Z^{21}, Z^{22}) \succeq \mathbf{0}}, \\
        \diag(Z^{11}) = z_1,~
        \diag(Z^{22}) = z_2, \\
        z_1 + z_2 \leq \mathbf{1}_n,~ 
        z_1, z_2 \in \{0,1\}^n \end{aligned}\right\}. 
    \end{align}
Observe that it is sufficient to enforce integrality on $z_1, z_2$, as $\diag(Z^{11}) = z_1$ and $\diag(Z^{22}) = z_2$ together with the linear matrix inequality enforce integrality of the entire $(2n+1)$-dimensional matrix~\cite{helmberg2000semidefinite}. Moreover, as shown in~\cite{DeMeijerSotirov23}, each extended matrix in the set on the right-hand side of~\eqref{eq:F1n-binary} has rank-one. 
The description of $\mathcal{F}_n^1$ in~\eqref{eq:F1n-binary} shows that any ternary rank-one PSD matrix is the projection of a binary rank-one PSD matrix in a higher-dimensional space. The following proposition connects their induced polytopes in a similar way.

\begin{proposition} \label{Prop:IQ_extendedform}
Let $\mathcal{P}^1_n := \mathrm{conv} \{ X \in \{0,1\}^{n \times n} \, : \, \, X \succeq \mathbf{0},~\rank(X) = 1\}$, i.e., the convex hull of all binary rank-one PSD matrices. Then,
    \begin{align}\label{eq:IQlifted}
        \mathcal{IQ}^1_n = \left\{ Z^{11} + Z^{22} - Z^{12} - Z^{21}~ :~~ \begin{aligned}
        {\mathcal{L}(z_1, z_2, Z^{11}, Z^{12}, Z^{21}, Z^{22}) \in \mathcal{P}_{2n+1}^1}\\ 
        \diag(Z^{11}) = z_1,~ \diag(Z^{22}) = z_2, \\
        z_1 + z_2 \leq \mathbf{1}_n \end{aligned}
        \right\}. 
    \end{align}
\end{proposition}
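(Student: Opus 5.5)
We prove the two inclusions separately. Throughout, let $\pi(\cdot) := Z^{11} + Z^{22} - Z^{12} - Z^{21}$ denote the linear projection and let $\mathcal{R}$ denote the set on the right-hand side of~\eqref{eq:IQlifted}.

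\emph{Inclusion $\mathcal{IQ}^1_n \subseteq \mathcal{R}$.} Since $\mathcal{R}$ is the image under a linear map of a convex set, namely $\mathcal{P}^1_{2n+1}$ intersected with linear constraints, it is convex. It therefore suffices to show $\mathcal{F}^1_n \subseteq \mathcal{R}$. Take $X = xx^\top \in \mathcal{F}^1_n$; this form is justified by Proposition~\ref{prop:Rank1}. Decompose $x = x' - x''$ with $x', x'' \in \{0,1\}^n$ and $x' + x'' \leq \mathbf{1}_n$. Then $v := (1, x', x'')$ is a binary vector, so $vv^\top$ is a binary rank-one PSD matrix of order $2n+1$ and lies in $\mathcal{P}^1_{2n+1}$. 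Moreover, $vv^\top = \mathcal{L}(x', x'', x'x'^\top, \dots)$ satisfies the diagonal constraints because $x'_i{}^2 = x'_i$, and $z_1 + z_2 \leq \mathbf{1}_n$ holds by construction. By~\eqref{eq:extended_eq1}, its projection is $xx^\top$.

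\emph{Inclusion $\mathcal{R} \subseteq \mathcal{IQ}^1_n$.} Take a feasible lifted matrix $W = \mathcal{L}(\cdot) \in \mathcal{P}^1_{2n+1}$ satisfying the linear constraints. Write $W = \sum_k \lambda_k v_k v_k^\top$ as a convex combination with $v_k \in \{0,1\}^{2n+1}$. Note that the top-left entry of each vertex is $(v_k)_0^2 \in \{0,1\}$, while $W_{00} = 1$. Hence every $v_k$ with $\lambda_k > 0$ has $(v_k)_0 = 1$; this is the key observation. Write $v_k = (1, a_k, b_k)$. The diagonal constraints $\diag(Z^{11}) = z_1$ hold automatically for each vertex, since binary entries are idempotent.

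The main obstacle is the constraint $z_1 + z_2 \leq \mathbf{1}_n$, which holds for $W$ only on average and not necessarily for each $v_k$: some vertex may have $(a_k)_i = (b_k)_i = 1$. I would handle this in one of two ways.

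The first option uses an additional valid inequality. For $W \in \mathcal{P}^1_{2n+1}$, the inequality $Z^{12}_{ii} \geq (z_1)_i + (z_2)_i - 1$ holds at every vertex (it is the RLT/McCormick inequality for $a_ib_i$). Summing the vertex inequalities $(a_k)_i + (b_k)_i - (a_k)_i(b_k)_i \leq 1$ does not directly give what we need, however, so I would instead try the second option.

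The second option is to check that a vertex violating the bound still projects into $\mathcal{IQ}^1_n$. If $a_i = b_i = 1$, then the $i$-th coordinate of $x = a - b$ is $0$, but the projected matrix $\pi(vv^\top) = (a-b)(a-b)^\top$ is still a ternary rank-one PSD matrix of the form $xx^\top$ with $x \in \{0,\pm1\}^n$. Its diagonal is $|a-b|$, so it lies in $\mathcal{F}^1_n$ by Proposition~\ref{prop:Rank1}. Therefore every vertex projects into $\mathcal{F}^1_n$, regardless of the constraint $z_1 + z_2 \leq \mathbf{1}$. By linearity, $\pi(W) = \sum_k \lambda_k (a_k - b_k)(a_k - b_k)^\top \in \mathcal{IQ}^1_n$.

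This argument actually shows that the constraint $z_1 + z_2 \leq \mathbf{1}_n$ is not needed for the reverse inclusion; it only makes the lifting of vertices unique.
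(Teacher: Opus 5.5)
Your proof is correct, and it differs from the paper's mainly in that it actually proves the reverse inclusion.

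The paper writes $\mathcal{F}^1_n$ as the image under $\pi$ of the binary rank-one lifted matrices $\mathcal{D}^1_{2n+1}$ that satisfy the side constraints. It then takes convex hulls of both sides and asserts that the result ``reduces to'' the right-hand side, citing linearity of $\diag$ and $\mathcal{P}^1_{2n+1}=\mathrm{conv}\{\mathcal{D}^1_{2n+1}\}$. This tacitly replaces the convex hull of the constrained discrete set by $\mathcal{P}^1_{2n+1}$ intersected with the constraints. On its own, that only yields $\mathcal{IQ}^1_n\subseteq\mathcal{R}$, which is your first part.

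In the lifted space the two sets genuinely differ because of $z_1+z_2\le\mathbf{1}_n$. For $n=1$, take the midpoint of $vv^\top$ and $ww^\top$ with $v=(1,0,0)^\top$ and $w=(1,1,1)^\top$. It lies in $\mathcal{P}^1_3$ and satisfies all side constraints, but has $Z^{12}=\tfrac12$, whereas every element of $\mathcal{D}^1_3$ satisfying the side constraints has $Z^{12}=0$.

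Your second part fills exactly this gap. The condition $W_{00}=1$ forces $(v_k)_0=1$ for every vertex used, and the diagonal constraints then hold vertexwise. A vertex with $a_i=b_i=1$ still projects to $(a-b)(a-b)^\top\in\mathcal{F}^1_n$, so the discrepancy vanishes after applying $\pi$. As you note, this shows that $z_1+z_2\le\mathbf{1}_n$ is redundant for the projected set. The same argument shows the diagonal constraints are implied by $W_{00}=1$.

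The paper's route buys brevity; yours buys a complete argument plus this redundancy observation. The abandoned ``first option'' paragraph plays no role and can be deleted.
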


\begin{proof}
    Let $\mathcal{D}^1_n :=  \{ X \in \{0,1\}^{n \times n}\, : \, \, X \succeq \mathbf{0},~\rank(X) = 1\}$, i.e., the set of binary rank-one PSD matrices. Then, the set on the right-hand side of~\eqref{eq:F1n-binary} can be written as
    \begin{align} \label{eq:F1n-binary2}
        \mathcal{F}^1_n = \left\{Z^{11} + Z^{22} - Z^{12} - Z^{21} \, : \, \, \begin{aligned}{\mathcal{L}(z_1, z_2, Z^{11}, Z^{12}, Z^{21}, Z^{22}) \in \mathcal{D}_{2n+1}^1} \\
        \diag(Z^{11}) = z_1, ~
        \diag(Z^{22}) = z_2, \\
        z_1 + z_2 \leq \mathbf{1}_n \end{aligned} \right\}, 
    \end{align}
    Taking the convex hull on both sides of~\eqref{eq:F1n-binary2} yields 
    \begin{align*}
        \mathcal{IQ}^1_n & = \mathrm{conv}\left\{Z^{11} + Z^{22} - Z^{12} - Z^{21} \, : \, \, \begin{aligned}{\mathcal{L}(z_1, z_2, Z^{11}, Z^{12}, Z^{21}, Z^{22}) \in \mathcal{D}_{2n+1}^1} \\
        \diag(Z^{11}) = z_1, ~
        \diag(Z^{22}) = z_2, \\
        z_1 + z_2 \leq \mathbf{1}_n \end{aligned} \right\}. 
    \end{align*}
    The latter set reduces to~\eqref{eq:IQlifted} due to the linearity of $\diag(\cdot)$ and the fact that $\mathcal{P}^1_{2n+1} = \mathrm{conv}\{\mathcal{D}^1_{2n+1}\}$.
\end{proof}

Proposition~\ref{Prop:IQ_extendedform} extends any known representation of the convex hull of binary rank-one PSD matrices into a representation of $\mathcal{IQ}^1_n$. It is not difficult to verify that the convex hull of binary rank-one PSD matrices relates to the well-known Boolean quadric polytope~\cite{Padberg1989TheBQ}, which is defined as $\mathcal{BQP}_n =\mathrm{conv} \{ (x,y) \in \{0,1\}^{n + \binom{n}{2} } \, : \, \, y_{ij} = x_ix_j \text{ for } 1 \leq i < j \leq n\}$.
Apart from the embedding, the set $\mathcal{P}_n^1$ coincides with $\mathcal{BQP}_n$, see also~\cite{DeMeijerSotirov23}. More specifically, if we define the symmetric vectorization operator $\text{svec}^+: \mathcal{S}^{n+1} \rightarrow \mathbb{R}^{n + \binom{n}{2}}$, which maps a symmetric matrix to the vector containing its strictly upper triangular entries, i.e., 
\begin{align*}
    \text{svec}^+\left( \begin{pmatrix}
    1 & z^\top \\ z & Z
\end{pmatrix} \right) := (z, y) \text{ where } y_{ij} = Z_{ij} \text{ for } 1 \leq i < j \leq n,
\end{align*}
it follows that $\text{svec}^+(\mathcal{P}^1_n) = \mathcal{BQP}_n$. Now, it follows from Proposition~\ref{Prop:IQ_extendedform} that
\begin{align*}
    \mathcal{IQ}^1_n = \left\{ Z^{11} + Z^{22} - Z^{12} - Z^{21}  : \begin{aligned}
        {\text{svec}^+ \left( \mathcal{L}(z_1, z_2, Z^{11}, Z^{12}, Z^{21}, Z^{22})\right) \in \mathcal{BQP}_{2n+1}^1 }\\
        \diag(Z^{11}) = z_1,~ \diag(Z^{22}) = z_2,~
        z_1 + z_2 \leq \mathbf{1}_n \end{aligned}
        \right\}. 
\end{align*} 
Alternatively, it is also known that $\mathcal{P}_n^1$ as defined in Proposition~\ref{Prop:IQ_extendedform} possesses a set-completely positive reformulation~\cite{Lieder2015} in terms of the cone of set-completely positive matrices $\mathcal{SCP}_n = \mathrm{conv} \{ xx^\top \, : \, \, x \in \mathbb{R}^n_+,~x_1 \geq x_i  \mbox{  for all } i = 2, \ldots, n\}$. 
Combining a result from~\cite{Lieder2015}, see also~\cite[Theorem 5]{DeMeijerSotirov23}, with Proposition~\ref{Prop:IQ_extendedform}, it follows that
\begin{align*}
    \mathcal{IQ}^1_n = \left\{  Z^{11} + Z^{22} - Z^{12} - Z^{21} ~: ~~ \begin{aligned}
        { \mathcal{L}(z_1, z_2, Z^{11}, Z^{12}, Z^{21}, Z^{22}) \in \mathcal{SCP}_{2n+1}^1} \\
        \diag(Z^{11}) = z_1,~ \diag(Z^{22}) = z_2, ~ z_1 + z_2 \leq \mathbf{1}_n  \end{aligned}
        \right\}. 
\end{align*}

\section{ISDP formulation for  the ternary quadratic problem}
\label{Sec:ISDP}
We first introduce the   quadratic optimization problem of interest and then reformulate it as an integer semidefinite program (ISDP), using the results from 
Section~\ref{sect: matrix properties}.

Let $Q = (q_{ij}) \in \sn$,   $c\in \R^n$, and
 $a_i\in \R^n$, $b_i \in \R$ for all $i\in [p]$, where $p\in \N$.
We consider the ternary quadratic problem (TQP): 
\begin{align} \label{ternaryProblem}
\begin{aligned}
\min\limits_{ x\in \{0, \pm 1 \}^n} ~~ &  x^\top Q x + c^\top x & \\
\st ~~& a_i^\top x = b_i  \quad i\in[p]. 
\end{aligned}
\end{align}
Observe that if $Q$ is indefinite, the resulting TQP is nonconvex. Although the TQP \eqref{ternaryProblem} has no quadratic constraints, the approach described here  applies also if we add quadratic constraints.
The quadratic objective in \eqref{ternaryProblem} can be written as $\langle Q, X \rangle  + c^\top x$, where we substitute $X$ for $xx^\top $. 
This yields the following exact reformulation of  \eqref{ternaryProblem}:
\begin{align} \label{ternaryProblemRank1}
\begin{aligned}
\min\limits_{ x\in \{0, \pm 1 \}^n} ~~ &  \langle Q, X \rangle  + c^\top x  \\
\st ~~  & a_i^\top x = b_i   \quad  i\in[p] \\
& Y = \begin{pmatrix}
1 & {x}^\top \\ {x} & {X}
\end{pmatrix} \succeq \mathbf{0},  \quad  \rank(Y)=1.  
\end{aligned}
\end{align}
The above problem  contains a nonconvex rank-one constraint.  However, one can utilize an
alternative notion of exactness in terms of integrality by exploiting \Cref{prop:Rank1}, and obtain the following
 ternary semidefinite program:
\begin{align} \label{ternaryProblemISDP}
\begin{aligned}
\min ~~ &  \langle Q, X \rangle  + c^\top x  \\
\st ~~ & a_i^\top x = b_i   \quad  i\in[p] \\
& Y = \begin{pmatrix}
1 & {x}^\top \\ {x} & {X}
\end{pmatrix} \succeq \mathbf{0}, \quad \diag(X) = |x| \\
&  Y\in \{0, \pm 1 \}^{(n+1) \times (n+1)}.
\end{aligned}
\end{align}
One can also reformulate \eqref{ternaryProblemRank1} as a binary semidefinite program by exploiting the 
transformation of ternary matrices into binary matrices, see \eqref{eq:F1n-binary}. However, such decomposition increases the size of the problem and makes it more difficult to solve.
The next result follows  from the previous discussion. 
\begin{theorem}
Optimization problems \eqref{ternaryProblem} and \eqref{ternaryProblemISDP} are equivalent.    
\end{theorem}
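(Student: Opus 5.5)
The plan is to exhibit a bijection between the feasible sets of \eqref{ternaryProblem} and \eqref{ternaryProblemISDP} that preserves objective values. Both directions of the bijection come from \Cref{prop:Rank1}. Equivalence then means two things: the optimal values coincide, and optimal solutions map to optimal solutions via $x \mapsto \bigl(x, xx^\top\bigr)$ and its inverse $(x,X)\mapsto x$.

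First, take $x \in \{0,\pm 1\}^n$ feasible for \eqref{ternaryProblem} and set $X := xx^\top$. The matrix $Y = \begin{pmatrix} 1 & x^\top \\ x & X\end{pmatrix}$ equals $(1, x^\top)^\top (1, x^\top)$, so it is PSD. Its entries are products of elements of $\{0,\pm1\}$, so they lie in $\{0,\pm1\}$. Moreover $X_{ii} = x_i^2 = |x_i|$ because $x_i \in \{0,\pm 1\}$, which gives $\diag(X) = |x|$. The linear constraints $a_i^\top x = b_i$ are untouched. For the objective, $\langle Q, xx^\top\rangle = x^\top Q x$, so the objective values agree. Hence the optimal value of \eqref{ternaryProblemISDP} is at most that of \eqref{ternaryProblem}.

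Conversely, let $(x,X)$ be feasible for \eqref{ternaryProblemISDP}. Then $Y$ is ternary, PSD and satisfies $\diag(X) = |x|$, which are exactly the hypotheses of \Cref{prop:Rank1}. That proposition yields $X = xx^\top$. Since $Y$ is ternary, $x \in \{0,\pm 1\}^n$, and $x$ satisfies the linear constraints, so $x$ is feasible for \eqref{ternaryProblem}. Its objective there is $x^\top Q x + c^\top x = \langle Q, X\rangle + c^\top x$, which gives the reverse inequality between optimal values.

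The two maps are mutually inverse on the feasible sets, because $X$ is determined by $x$ in the second direction. The only nontrivial ingredient is the step $X = xx^\top$, which rests on \Cref{prop:Rank1}. Since that proposition may be assumed, I expect no real obstacle. The one point to check carefully is that the diagonal condition $\diag(X)=|x|$ holds for ternary $x$ in the first direction, which is immediate from $x_i^2 = |x_i|$.
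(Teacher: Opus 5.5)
Your proposal is correct and follows essentially the same route as the paper. The paper obtains the equivalence by substituting $X = xx^\top$ to get the rank-one formulation \eqref{ternaryProblemRank1}, and then uses \Cref{prop:Rank1} to replace the rank-one constraint by ternary integrality of $Y$ together with $\diag(X)=|x|$; your explicit bijection $x \mapsto (x, xx^\top)$ writes out that discussion in full.
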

One can relax some of the integrality conditions on $Y$ in \eqref{ternaryProblemISDP} and maintain exactness. Namely, we can request only $x\in \{0,\pm 1 \}$ instead of  $Y\in \{0, \pm 1 \}^{(n+1) \times (n+1)}$ in \eqref{ternaryProblemISDP}. To see this, we consider the three-by-three principal submatrices of $Y$ indexed by ${\mathcal I} \times {\mathcal I}$, where  ${\mathcal I}=[0,i,j]^\top$, for some $i,j\in [n]$, $i<j$.
Here, $0$  represents the first row/column of $Y$. Then,
${\rm det} (Y_{\mathcal{I}})= -(x_i x_j - X_{ij})^2 \geq 0,$
from where it follows $ X_{ij}=x_i x_j.$
Here, we exploit the fact that $|x_i|=x_i^2$ for $x_i\in \{0,\pm 1\}.$
A more compact formulation of \eqref{ternaryProblemISDP} can be obtained by using the following result.
\begin{lemma} \label{lemma: compact representation}
Let  
 $Y = \begin{pmatrix}
1 & {x}^\top \\ {x} & {X}
\end{pmatrix} \succeq \mathbf{0} $ such that $\diag(X) = |x|$ and  $x\in \{0,\pm 1 \}$, and let
 $S=\sum_{i=1}^p \begin{psmallmatrix}  -b_i \\ a_i \end{psmallmatrix}  \begin{psmallmatrix}  -b_i \\ a_i \end{psmallmatrix}^\top $.
 Then,  $ a_i^\top x = b_i$  $\forall i\in[p]$ if and only if $\langle S,Y \rangle =0$.
\end{lemma}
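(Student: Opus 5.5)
The plan is to reduce everything to the rank-one structure of $Y$ that the hypotheses force. As argued just before the lemma, with $x\in\{0,\pm1\}^n$ and $\diag(X)=|x|$, positive semidefiniteness of the $3\times 3$ principal submatrices $Y_{\mathcal I}$ with ${\mathcal I}=[0,i,j]^\top$ gives $\det(Y_{\mathcal I}) = -(x_ix_j-X_{ij})^2\ge 0$, hence $X_{ij}=x_ix_j$ for $i\neq j$. The diagonal satisfies $X_{ii}=|x_i|=x_i^2$. Therefore $X=xx^\top$, and
\[
Y=\begin{pmatrix}1\\ x\end{pmatrix}\begin{pmatrix}1\\ x\end{pmatrix}^\top .
\]
One could equally invoke \Cref{prop:Rank1}, once the integrality of $Y$ is established this way.

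With this in hand, I would compute the inner product directly:
\[
\langle S,Y\rangle=\sum_{i=1}^p \Big\langle \begin{pmatrix}-b_i\\ a_i\end{pmatrix}\begin{pmatrix}-b_i\\ a_i\end{pmatrix}^\top, \begin{pmatrix}1\\ x\end{pmatrix}\begin{pmatrix}1\\ x\end{pmatrix}^\top\Big\rangle=\sum_{i=1}^p \Big(\begin{pmatrix}-b_i\\ a_i\end{pmatrix}^\top\begin{pmatrix}1\\ x\end{pmatrix}\Big)^2=\sum_{i=1}^p (a_i^\top x-b_i)^2 .
\]
This uses the identity $\langle uu^\top,vv^\top\rangle=(u^\top v)^2$.

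Both directions then follow. If $a_i^\top x=b_i$ for all $i$, every summand vanishes and $\langle S,Y\rangle=0$. Conversely, if $\langle S,Y\rangle=0$, we have a sum of nonnegative squares equal to zero, so each $a_i^\top x-b_i=0$.

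The only step that needs care is the first one: establishing $Y=(1;x)(1;x)^\top$ from the stated hypotheses, in particular noticing that $|x_i|=x_i^2$ for ternary $x_i$, which makes the diagonal consistent with the rank-one form. Everything after that is a routine calculation.

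As a remark, the forward direction and the nonnegativity $\langle S,Y\rangle\ge 0$ hold for any $Y\succeq\mathbf 0$, since $S\succeq\mathbf 0$. The rank-one structure is what the converse genuinely needs.
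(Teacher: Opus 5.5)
Your proof is correct and follows the paper's own proof. The paper likewise first obtains $Y=\begin{pmatrix}1\\ x\end{pmatrix}\begin{pmatrix}1\\ x\end{pmatrix}^\top$, from \Cref{prop:Rank1} and the $3\times 3$ minor computation just before the lemma. It then reads off $\langle S,Y\rangle=\sum_{i=1}^p(a_i^\top x-b_i)^2$.

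Your closing remark, however, has the two directions reversed. For an arbitrary $Y\succeq\mathbf 0$ with $Y_{00}=1$, the Schur complement gives $X-xx^\top\succeq\mathbf 0$. Moreover
\[
\langle S,Y\rangle=\sum_{i=1}^p\Big[(a_i^\top x-b_i)^2+a_i^\top(X-xx^\top)a_i\Big],
\]
where both terms are nonnegative. So it is the converse, $\langle S,Y\rangle=0\Rightarrow a_i^\top x=b_i$, that holds without any rank-one structure. This is exactly the argument of \Cref{Lem:nullspace}.

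The forward direction fails in general. Take $n=p=1$, $a_1=1$, $b_1=0$, $x=0$, $X=1$. Then $Y=\mathbf I_2\succeq\mathbf 0$ and $a_1^\top x=b_1$, but $\langle S,Y\rangle=1$. The forward direction is therefore where you genuinely need $X=xx^\top$, i.e.\ the hypothesis $\diag(X)=|x|$ with ternary $x$.
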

\begin{proof}
It follows from  \Cref{prop:Rank1}   and the previous discussion,  that 
$Y=\begin{psmallmatrix}  1 \\ x \end{psmallmatrix} \begin{psmallmatrix}  1 \\ x \end{psmallmatrix}^\top $.
Therefore, $\sum_{i=1}^p \left \langle  \begin{psmallmatrix}  -b_i \\ a_i \end{psmallmatrix}  \begin{psmallmatrix}  -b_i \\ a_i \end{psmallmatrix}^\top, \begin{psmallmatrix}  1 \\ x \end{psmallmatrix} \begin{psmallmatrix}  1 \\ x \end{psmallmatrix}^\top \right \rangle = \sum_{i=1}^p ( a_i^\top x - b_i)^2,$
from where the result follows.
\end{proof}

\section{SDP relaxation and valid inequalities}
\label{Sect:SDP and cuts}
In this section, we first present a basic SDP relaxation for the TQP \eqref{ternaryProblem},
and then introduce several classes of valid inequalities that can be imposed to strengthen it. 
We derive those inequalities by using various techniques, including  the RLT and Chv\'atal-Gomory procedure.

We derive a semidefinite relaxation for the TQP \eqref{ternaryProblem} 
from the ISDP  reformulation~\eqref{ternaryProblemISDP} by relaxing integrality constraints and 
the  constraint $\diag(X)=|x|$.
The resulting basic SDP relaxation is given below. 
\begin{align} \label{ternaryProblemSDP}
\begin{aligned}
\min ~~ &  \langle Q, X \rangle  + c^\top x  \\
\st ~~ & a_i^\top x = b_i   \quad  i\in[p] \\
& \diag(X) \geq x, \quad  \diag(X) \geq -x \\
& \diag(X) \leq {\mathbf 1}_n, \quad 
 \begin{pmatrix}
1 & {x}^\top \\ {x} & {X}
\end{pmatrix} \succeq \mathbf{0}.
\end{aligned}
\end{align}
We add constraints that impose upper bounds on the diagonal of the matrix variable to preserve the boundedness of the relaxation.
In particular, one can add  to \eqref{ternaryProblemSDP}  the following constraints
\begin{align}\label{constr:quadratic}
\langle a_ia_i^\top, X \rangle = b_i^2 \quad \mbox{ for all } i\in [p].
\end{align}
Note that by adding  those constraints the corresponding linear constraints are, in general,  not redundant. 
Further, one can easily verify  that for $(x,X)$ such that 
$ X - xx^\top  \succeq \mathbf{0}$,
 $\diag(X) \geq x$, $\diag(X) \geq -x$ and $\diag(X) \leq {\mathbf 1}$ it follows that $-1 \leq X_{ij} \leq 1$ and   $-1 \leq x_i \leq 1$ for all $i,j\in [n].$ 

\begin{remark}\label{remark2}
Buchheim and Wiegele~\cite{Buchheim2012SemidefiniteRF}  consider a nonconvex quadratic mixed-integer optimization problem and derive its exact reformulation in matrix form by using a rank-one constraint. 
Their exact matrix reformulation, when restricted to the  TQP~\eqref{ternaryProblem}, differs from \eqref{ternaryProblemRank1} in the constraint   
$x\in \{0, \pm 1 \}^n$, which is replaced by  $(x_i, X_{ii}) \in  P(\{0,\pm 1 \})= \mathrm{conv} \{ (x,x^2) ~:~x\in \{0,\pm 1 \} \}$  for all $i\in [n]$.
The polytope $P(\{0,\pm 1 \})$ is completely described by the following lower bounding facets
\begin{align} \label{lowerFacet}
X_{ii} \geq x_i \quad \mbox{and} \quad X_{ii} \geq -x_i \quad i\in [n],
\end{align}
and the upper bounding facet
$X_{ii} \leq 1$ for $i\in [n].$
Therefore, the SDP relaxation of the TQP~\eqref{ternaryProblem} from  \cite{Buchheim2012SemidefiniteRF},  obtained by omitting the rank-one constraint, corresponds to~\eqref{ternaryProblemSDP}.
\end{remark}

In what follows, we introduce valid inequalities, which together with \eqref{constr:quadratic}, can be to added  the SDP relaxation~\eqref{ternaryProblemSDP}. The most natural choice is to add to~\eqref{ternaryProblemSDP} the inequalities \eqref{C:eq1}--\eqref{C:eq4}  from \Cref{prop:triangle_ineq}. The inequalities~\eqref{C:eq4} are known in the literature as the \emph{pair inequalities}, see e.g.,~\cite{piccialli2022sos}.
These inequalities are non-standard split inequalities, and we show how to derive them later. \\

\noindent {\bf The (generalized) triangle inequalities.} 
 The inequalities~\eqref{C:eq1}--\eqref{C:eq2.3} are known in the literature as the  triangle inequalities. 
These inequalities are also Chv\'atal-Gomory cuts for the ISDP~\eqref{ternaryProblemISDP}, as we will show below.
The Chv\'atal-Gomory (CG) cuts form a well-known family of cutting planes for integer programs, initially designed for integer linear programs by Chv\'atal~\cite{Chvatal} and Gomory~\cite{Gomory}. Cuts for integer conic problems and ISDP have been investigated in \cite{CezikIyengar} and in \cite{de2022chv,DeMeijerSotirovWiegele24}, respectively.

One can show that the first set of inequalities in \eqref{C:eq1} are Chv\'atal-Gomory cuts by taking the vector $v \in \Z^{n+1}$ (indexed from zero) which has all zero entries except at positions $i,j,k$ where $v_i=v_j=v_k=1$. Then, for any $Y$  feasible for \eqref{ternaryProblemISDP} we have
$v^\top Y v \geq 0$ since $Y, vv^\top \succeq \mathbf{0}$. 
Thus, $v^\top Y v  = 2Y_{ij}+2Y_{ik}+2 Y_{jk} \geq -Y_{ii}-Y_{jj}-Y_{kk} \geq -3$, from where it follows that 
$Y_{ij}+Y_{ik}+ Y_{jk} \geq \lceil -1.5 \rceil = -1$ where we exploited the fact that the left-hand side of the inequality is an integer number. 
Similarly, one can derive other triangle inequalities from \eqref{C:eq1}--\eqref{C:eq2.3} as  Chv\'atal-Gomory cuts.

Interestingly, triangle inequalities  \eqref{C:eq1}--\eqref{C:eq2.3} can be generalized for any $S\subseteq [n]$ such that $|S|$ is an {\em odd} number.
In particular, let $S\subseteq [n]$ and $|S|$ odd, then
we introduce the following valid inequalities
\begin{align} \label{oddS}
\sum_{i,j\in S, i<j} v_iv_j X_{ij} \geq \left \lceil -\frac{1}{2}|S| \right \rceil,
\end{align}
where $v_i,v_j \in \{\pm 1\}$. There are $2^{|S|-1}$ constraints of this type  for a given $S$.
To prove that  \eqref{oddS} are Chv\'atal-Gomory cuts for the ISDP~\eqref{ternaryProblemISDP}, one can take 
 the vector $v \in \Z^{n+1}$ (indexed from zero) 
with all entries equal to zero except for $1$ or $-1$ at positions corresponding to the elements of $S$. Then, one can proceed similarly as for deriving the triangle inequalities. Thus, for $|S|=5$ (resp.~$|S|=7$) we obtain pentagonal (resp.~heptagonal)   inequalities for the TQP, and so on.
 Note that for subsets with an even number of elements, the corresponding inequalities are not binding.

Although the triangle, pentagonal and heptagonal inequalities for the max-cut problem can be obtained as hypermetric inequalities~\cite{DezaLaurent1997}, the inequalities~\eqref{oddS} for the TQP cannot be derived in that way.\\

\noindent {\bf The RLT inequalities.}
The SDP relaxation  \eqref{ternaryProblemSDP} can be strengthened by adding 
valid inequalities  obtained by using the reformulation linearization technique~\cite{Sherali1998ART}. In particular, we derive the following RLT inequalities considering the bounds on the original variables: 
\begin{align}
    & \mbox{ From $x_i\geq -1$ and $x_j \geq -1$ for $i,j\in [n]$, $i<j$ : } & \!\!\!  X_{ij} + x_i+x_j \geq -1.  \label{rlt:1} \\[0.7ex]
    & \mbox{ From $x_i\leq 1$ and $x_j \leq 1$ for $i,j\in [n]$, $i<j$ : } &  X_{ij} - x_i- x_j \geq -1. \label{rlt:2}  \\[0.7ex]
    & \mbox{ From $x_i\geq -1$ and $x_j \leq 1$ for $i,j\in [n]$, $i<j$ : } &   -X_{ij} + x_i- x_j \geq -1. \label{rlt:3} \\[0.7ex]
     & \mbox{ From $x_i\leq 1$ and $x_j \geq -1$ for $i,j\in [n]$, $i<j$ :} &   -X_{ij} - x_i + x_j \geq -1.  \label{rlt:4}
\end{align}
Inequalities of this kind were originally introduced by McCormick~\cite{McCormick1976ComputabilityOG}. It is interesting to note that the RLT inequalities \eqref{rlt:1}--\eqref{rlt:4} may be derived by applying the  Chv\'atal-Gomory procedure with appropriate dual multipliers. 
For example, to derive \eqref{rlt:1} one can take
$v\in \Z^{n+1}$   (indexed from zero) 
with all entries equal to zero except at positions $0,i,j$ where $v_0=v_i=v_j=1$ for $i< j$. 
Then,  from $\langle vv^\top,Y\rangle \geq 0$, where $Y$ is feasible for \eqref{ternaryProblemISDP}, and with appropriate rounding, \eqref{rlt:1} follows.
Similarly, one can derive \eqref{rlt:2}--\eqref{rlt:4} as Chv\'atal-Gomory cuts.
Thus, the RLT-inequalities~\eqref{rlt:1}--\eqref{rlt:4} are also Chv\'atal-Gomory cuts.  
Note that inequalities  \eqref{rlt:1}--\eqref{rlt:4} can be generalized to any $S\subseteq [n]$ such that  $|S|$ is even. \\

\noindent {\bf Split inequalities.}
Split inequalities are introduced by Letchford \cite{10.1007/978-3-642-13036-6_20}, and further studied in 
\cite{Burer2014UnboundedCS,Buchheim2015OnTS}.
A split inequality corresponds  to split disjunction of the form 
\begin{align}\label{constr:split}
(w^\top x \leq s) \, \lor \, (w^\top x \geq s+1),    
\end{align}
where $w\in \Z^n$ and $s \in \Z.$  The above disjunction imposes that points in the feasible region lie either in the halfspace $w^\top x \leq s$ or in the halfspace $w^\top x \geq s+1$.
The  split disjunction \eqref{constr:split} can be reformulated as the  quadratic inequality $(w^\top x - s)(w^\top x - s-1 )\geq 0$.
In the lifted space, that quadratic inequality is linearized and takes the form:
\begin{align} \label{split inequality}
\left \langle v(v+e_0)^\top, \begin{pmatrix} 1 &  x^\top  \\ {x} & {X}\end{pmatrix} \right \rangle \geq 0 \quad
\forall v \in \Z^{n+1},
\end{align}
where $v = (-s-1, w^\top)^\top \in \Z^{n+1}$, $e_0$ is the unit vector indexed from zero.
 Buchheim and Traversi~\cite{Buchheim2015OnTS}  tested the quality of split inequalities on problems with up to 50 variables and concluded that these cuts are strong. The main obstacle in applying these cuts is identifying the violated ones.  Moreover, it is unclear whether the separation of split inequalities is an ${\mathcal NP}$-hard problem or not.

{In this work, we exploit  two types of split inequalities, which we list below.}  It is not difficult to verify that the inequalities~\eqref{lowerFacet} are  split inequalities. To verify this claim for $X_{ii}\geq x_i$ (resp.~$X_{ii}\geq -x_i)$, take $v \in \Z^{n+1}$ (indexed from zero) that has all zero entries except $v_0=-1$ and $v_i=1$ (resp.~$v_i=-1$) for some $i\in[n]$.  
Then, the inequalities~\eqref{lowerFacet} follow by using the vectors in~\eqref{split inequality}.

A generalization of inequalities~\eqref{lowerFacet} to split inequalities with two indices $i,j \in [n]$, $1\leq i< j \leq n$ is given below:
\begin{align}
X_{ii} + X_{jj} +2X_{ij} + x_i+ x_j &\geq 0, \quad   X_{ii} + X_{jj} +2X_{ij} - x_i- x_j \geq 0  \label{newsplit1}\\
X_{ii} + X_{jj} -2X_{ij} + x_i- x_j &\geq 0,  \quad  X_{ii} + X_{jj} -2X_{ij} - x_i+ x_j \geq 0.  \label{newsplit4}
\end{align}
We derive the first split inequality in \eqref{newsplit1}
by taking a binary vector $v \in \{0,1\}^{n+1}$ (indexed from zero) with all entries equal to zero except at positions $i$ and $j$ ($i<j$), 
and applying it in~\eqref{split inequality}. The remaining inequalities can be derived in a similar way.
Clearly, this approach can be extended to  split inequalities with three or more indices. However, the number of associated valid inequalities increases exponentially. That is, for a given $k\leq n$ indices, there are $2^k$ split inequalities of the above type. 
We note that we tested additional types of split inequalities, but the two mentioned above were the most effective in our numerical experiments.
\\

\noindent {\bf Non-standard split inequalities.}
Inequalities of the form $\langle vw^\top,Y\rangle \geq 0$ where $w,v$ are vectors of appropriate size are introduced in \cite{10.1007/978-3-642-13036-6_20}.
Non-standard split inequalities are further studied in \cite{Burer2014UnboundedCS,Buchheim2015OnTS}.
The following result, which holds for the polytope  $\mathcal{IQ}^1_n$, see~\eqref{IQ1n}, was established in \cite{Buchheim2015OnTS}.
\begin{lemma}[\cite{Buchheim2015OnTS}] \label{lemma:non-standard split}
For any vector $v\in \Z^{n+1}$ with $v_0=0$, the non-standard split inequalities
$\left \langle v(v+e_j)^\top, \begin{psmallmatrix} 1 &  x^\top  \\ {x} & {X}\end{psmallmatrix} \right \rangle \geq 0,$ for $j\in [n]$, are valid for $X\in \mathcal{IQ}^1_n$. 
\end{lemma}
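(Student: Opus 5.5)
Since the inequality is linear in $Y = \begin{psmallmatrix} 1 & x^\top \\ x & X\end{psmallmatrix}$ and $\mathcal{IQ}^1_n = \mathrm{conv}\{\mathcal{F}^1_n\}$, it is enough to check validity at the extreme points, i.e., at the matrices in $\mathcal{F}^1_n$. By Proposition~\ref{prop:Rank1}, every such point has $Y = \begin{psmallmatrix} 1 \\ x \end{psmallmatrix}\begin{psmallmatrix} 1 \\ x \end{psmallmatrix}^\top$ with $x \in \{0,\pm 1\}^n$. Write $\hat x := (1, x^\top)^\top$ (indexed from zero). Then
\begin{align*}
\left\langle v(v+e_j)^\top, Y \right\rangle = (v^\top \hat x)\,\big((v+e_j)^\top \hat x\big).
\end{align*}
Since $v_0 = 0$, we have $v^\top \hat x = \sum_{i=1}^n v_i x_i =: t$, and $e_j^\top \hat x = x_j$. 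The expression therefore equals $t(t + x_j)$, and validity reduces to showing $t(t+x_j) \geq 0$ for all integer $v$ with $v_0 = 0$ and all $x \in \{0,\pm1\}^n$.

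Because $v \in \Z^{n+1}$ and $x$ is integral, $t \in \Z$, and $x_j \in \{-1,0,1\}$. I would split into three cases.
\begin{itemize}
\item If $x_j = 0$, the value is $t^2 \geq 0$.
\item If $x_j = 1$, the value is $t(t+1)$, a product of consecutive integers, hence nonnegative.
\item If $x_j = -1$, the value is $t(t-1) \geq 0$ for the same reason.
\end{itemize}
So the inequality holds on every point of $\mathcal{F}^1_n$. Taking convex combinations extends it to $\mathcal{IQ}^1_n$, since the left-hand side is a linear functional of $Y$ and the leading entry $Y_{00}=1$ is preserved under convex combinations.

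There is no real obstacle. The only points needing care are these:
\begin{itemize}
\item The entry $e_j$ sits at an index $j \geq 1$, so it picks out $x_j$ rather than the constant $1$.
\item The hypothesis $v_0 = 0$ is what makes $v^\top \hat x$ equal to $t$, with no constant offset.
\item The ternary values of $x_j$ are what allow the consecutive-integer argument; for general integers $|x_j| \geq 2$ the product $t(t+x_j)$ could be negative.
\end{itemize}
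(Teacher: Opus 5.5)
The paper gives no proof of this lemma (it is quoted from \cite{Buchheim2015OnTS}). Your argument is correct and complete, and it is the natural one, in the same spirit as the paper's motivation of split inequalities as linearized products of integer-valued affine forms: at each generator $\hat x\hat x^\top$ of $\mathcal{IQ}^1_n$ the left-hand side factors as $t(t+x_j)$ with $t\in\Z$ and $|x_j|\le 1$, and linearity carries this to the convex hull.

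One small correction to your closing remarks: the hypothesis $v_0=0$ is not actually needed. With $t:=v^\top\hat x=v_0+\sum_{i\ge 1}v_ix_i$, which is still an integer, your case analysis goes through verbatim. So $v_0=0$ merely restricts the family; with $e_0$ in place of $e_j$ one recovers the standard split inequalities instead.
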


One can prove that the inequalities in~\eqref{C:eq4} are non-standard split inequalities. Namely, for the first set of inequalities in \eqref{C:eq4} (resp.~the second set in \eqref{C:eq4}),  take $v \in \Z^{n+1}$ (indexed from zero) with all entries zero except at position $i\in [n]$, where   $v_i=-1$ (resp.~$v_i=1$),  
 and the $j^{\text{th}}$ unit vector $e_j \in \Z^{n+1}$  (indexed from zero, $i\neq j$), and apply \Cref{lemma:non-standard split}.

 We used the software SageMath \cite{SageMath} to find the complete description of $\mathcal{IQ}^1_3$. The polytope  $\mathcal{IQ}^1_3$ is described by the triangle inequalities \eqref{C:eq1}--\eqref{C:eq2.3}, 
 bound constraints  $X_{ii}\leq 1$ for $i\in [3]$ (see also \Cref{remark2}), non-standard split inequalities of the form~\eqref{C:eq4}, and 12 additional non-standard split inequalities. In total, there are 31 facet-defining inequalities  of $\mathcal{IQ}^1_3$.
All these inequalities can be imposed on any principal submatrix $X_{J}$ of $X$ from \eqref{ternaryProblemSDP}, where
$J\subseteq [n]$, $|J|=3$, to strengthen the SDP relaxation.
Preliminary numerical results show that it is most beneficial adding the inequalities~\eqref{C:eq4}.

\section{Special cases of the TQP} \label{Sec:SpecialCases}
\subsection{The quadratic unconstrained ternary optimization problem}
\label{sect:QUTO}
In this section, we study the QUTO and its relation to the max-cut problem.

The QUTO  is given as follows:
\begin{align} \label{QUTO}
\begin{aligned}
\min ~~ &  \langle Q, X \rangle  + c^\top x  \\
\st ~~ &  \begin{pmatrix}
1 & {x}^\top \\ {x} & {X}
\end{pmatrix} \succeq \mathbf{0}, \quad \diag(X) = |x| \\
&  X \in \{0, \pm 1 \}^{(n+1) \times (n+1)}.
\end{aligned}
\end{align}
The well-known max-cut problem is a special case of the  QUTO~\eqref{QUTO}, in which the variables are restricted to take values in $\{\pm 1 \}$.
In the sequel, we show that the QUTO  also reduces to the max-cut problem for a particular quadratic objective.
\begin{theorem} \label{Thm:fixing}
Consider the QUTO  \eqref{QUTO}.
If $q_{ii} \leq 0$ for some $i\in[n]$, then there exists an optimal solution to this problem with 
$x_i \in \{\pm 1\}$.
\end{theorem}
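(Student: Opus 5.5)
By Theorem~\ref{prop:triangle_ineq}'s surrounding discussion and Proposition~\ref{prop:Rank1}, every feasible point of \eqref{QUTO} corresponds to some $x\in\{0,\pm1\}^n$ with $X=xx^\top$. The objective can therefore be written as the function
\[
f(x)=x^\top Qx+c^\top x=\sum_{k,l}q_{kl}x_kx_l+\sum_k c_kx_k .
\]
The plan is a local exchange argument. Take any optimal $x^*$ with $x^*_i=0$ and show that one of the two vectors obtained by setting the $i$-th entry to $+1$ or to $-1$ is also optimal.

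Fix all coordinates except $x_i$. Write $x^*=\bar x$ with $\bar x_i=0$ and let $x^{\pm}=\bar x\pm \mathbf e_i$. Expanding the objective as a function of the single variable $t=x_i$ gives
\[
g(t)=f(\bar x+t\mathbf e_i)=f(\bar x)+t\,\beta+t^2 q_{ii},\qquad \beta:=2\sum_{l\neq i}q_{il}\bar x_l+c_i ,
\]
using the symmetry of $Q$. Then $g(1)+g(-1)=2f(\bar x)+2q_{ii}\le 2f(\bar x)$, since $q_{ii}\le 0$. Hence $\min\{g(1),g(-1)\}\le f(\bar x)=f(x^*)$.

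Choose the sign $\sigma\in\{\pm1\}$ with $\sigma\beta\le 0$ (so that $g(\sigma)=f(x^*)+\sigma\beta+q_{ii}\le f(x^*)$), and set $x=x^*+\sigma \mathbf e_i$. This vector is still ternary. The QUTO has no other constraints, so $Y=\begin{psmallmatrix}1\\x\end{psmallmatrix}\begin{psmallmatrix}1\\x\end{psmallmatrix}^\top$ is feasible for \eqref{QUTO}, with $\diag(X)=|x|$ automatically satisfied. Its objective value is at most the optimum, so it is optimal and has $x_i\in\{\pm1\}$.

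The main point requiring care is this feasibility/equivalence step. We must pass from the matrix formulation \eqref{QUTO} to the vector function $f$ using Proposition~\ref{prop:Rank1}, and we must note that the absence of linear constraints is exactly what allows the coordinate change. The inequality itself is a one-line convexity-type (concavity in $t$) observation. If desired, the argument can be applied to several indices with $q_{ii}\le 0$ simultaneously. Because each flip does not increase the objective, and because the argument for index $i$ only requires the current point to be optimal, we can flip all such zero coordinates one at a time.
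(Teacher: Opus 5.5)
Your proposal is correct and follows essentially the same argument as the paper. Setting $x_i=\pm1$ in a solution with $x^*_i=0$ changes the objective by $q_{ii}\pm\beta$, where $\beta=2\sum_{l\neq i}q_{il}x^*_l+c_i$, and $q_{ii}\le 0$ forces one of these two changes to be nonpositive; your use of Proposition~\ref{prop:Rank1} to pass between \eqref{QUTO} and the vector objective, and your remark on flipping several indices in sequence (which is what Corollary~\ref{corr:maxcut} needs), simply make explicit what the paper leaves implicit.
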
 
\begin{proof}
    Let $i$ be such that $q_{ii} \leq 0$. Now, suppose $x^*$ is a feasible solution to the unconstrained problem~\eqref{ternaryProblem} with $x^*_i = 0$. We define the sets $S := \{j : x_j^* = 1\}$ and $T:= \{j : x_j^* = -1\}$ and let $f(x^*) = (x^*)^\top Q x^* + c^\top x^*$.  Moreover, we define $\beta = 2\sum_{j \in S}q_{ij} - 2\sum_{j \in T}q_{ij} + c_i$.

    Now, we construct another solution $x'$ from $x^*$ such that $x'_i = 1$ and $x'_j = x^*_j$ for all $j \neq i$.  The objective value of this solution equals $f(x') = f(x^*) + q_{ii} + \beta$. Alternatively, we construct $x''$ from $x^*$ such that $x''_i = -1$ and $x''_j = x^*_j$ for all $j \neq i$. Then, $f(x'') = f(x^*) + q_{ii} - \beta$.
    Since $q_{ii} \leq 0$, it follows that either $f(x') \leq f(x^*)$ or $f(x'') \leq f(x^*)$. Restricting the $i$th element of $x$ to an element in $\{\pm 1\}$ will therefore never increase the objective value. This holds in particular for the optimal solution.
\end{proof}

The result of Theorem~\ref{Thm:fixing} is exploited in the following corollary, which states that the 
QUTO is equivalent to the max-cut problem if $\diag(Q) \leq \mathbf{0}_n$. 
\begin{corollary}\label{corr:maxcut}
    Suppose $q_{ii} \leq 0$ for all $i \in [n]$. Then, the QUTO~\eqref{QUTO} is equivalent to the max-cut problem. 
\end{corollary}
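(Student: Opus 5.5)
Since every $q_{ii}\le 0$, I will apply Theorem~\ref{Thm:fixing} to all coordinates at once and then turn the resulting $\{\pm 1\}$-problem into a max-cut instance. The only delicate point is that Theorem~\ref{Thm:fixing} is stated for a single index. Fixing one coordinate to $\pm1$ must not interfere with fixing the others, so I will run its argument iteratively.

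Step 1 (reduction to $\{\pm1\}^n$). Take an optimal solution $x^*$ of the QUTO; it exists because the feasible set is finite. Suppose $x^*$ has a zero entry at some index $i$. The construction in the proof of Theorem~\ref{Thm:fixing} produces $x'$ or $x''$ that differs from $x^*$ only in coordinate $i$, now equal to $\pm1$, with objective value at most $f(x^*)$. The new point is therefore again optimal and has strictly fewer zero entries. The entries already equal to $\pm1$ are unchanged, and the hypothesis $q_{ii}\le 0$ holds for every index, so I can repeat the step. After at most $n$ repetitions I obtain an optimal solution in $\{\pm1\}^n$. Hence
\[
\min_{x\in\{0,\pm1\}^n} x^\top Qx+c^\top x \;=\; \min_{x\in\{\pm1\}^n} x^\top Qx+c^\top x .
\]
By the equivalence of \eqref{ternaryProblem} with the ISDP (in its unconstrained form \eqref{QUTO}), this is the optimal value of \eqref{QUTO}. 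Conversely, every $\{\pm1\}$-solution is feasible for \eqref{QUTO}, with $X=xx^\top$ and $\diag(X)=\mathbf 1=|x|$.

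Step 2 (the $\{\pm1\}$-problem is max-cut). The linear term is removed by the standard homogenization. Set $\tilde x=(x_0,x)\in\{\pm1\}^{n+1}$ and
$\tilde Q=\begin{psmallmatrix}0 & c^\top/2\\ c/2 & Q\end{psmallmatrix}$.
Then $\tilde x^\top\tilde Q\tilde x = x^\top Qx + x_0c^\top x$. Replacing $\tilde x$ by $-\tilde x$ leaves this value unchanged, so we may assume $x_0=1$. Minimizing $\tilde x^\top \tilde Q\tilde x$ over $\{\pm1\}^{n+1}$ is max-cut on the complete graph with $n+1$ vertices and edge weights $w_{ij}=-\tilde Q_{ij}$, up to the additive constant $\tr(\tilde Q)$. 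This uses $\tilde x^\top \tilde Q\tilde x=\tr(\tilde Q)+\sum_{i\ne j}\tilde Q_{ij}\tilde x_i\tilde x_j$ together with $\tilde x_i\tilde x_j=1-2\cdot[\text{$ij$ cut}]$. In matrix form, this is the problem over $\{Y\in\{\pm1\}^{(n+1)\times(n+1)}:Y\succeq\mathbf 0,\ \diag(Y)=\mathbf 1\}$, which is max-cut by Proposition~\ref{prop:characterMC} and the remark following it. Reading the argument backwards shows that every max-cut instance is a QUTO, so the two problems are equivalent.

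The main obstacle is the iteration in Step 1, namely checking that Theorem~\ref{Thm:fixing} can be applied to an optimal point that already has some coordinates fixed. Its proof handles exactly this: it modifies an arbitrary feasible point in a single coordinate and does not move the other coordinates. Step 2 is routine bookkeeping.
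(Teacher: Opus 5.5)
Your proof is correct in substance and follows the same route as the paper. Theorem~\ref{Thm:fixing} gives an optimal solution in $\{\pm1\}^n$, and the resulting $\{\pm1\}$-problem is then identified with max-cut. You are more careful than the paper in Step~1, where you spell out the coordinate-by-coordinate iteration that the paper leaves implicit. In Step~2 you carry out the homogenization yourself, whereas the paper cites the equivalence of the Ising QUBO with max-cut.

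Step~2 contains a sign error. Write $\delta_{ij}$ for the cut indicator and substitute $\tilde x_i\tilde x_j=1-2\delta_{ij}$. This gives
\[
\tilde x^\top\tilde Q\tilde x=\mathbf{1}^\top\tilde Q\mathbf{1}-4\sum_{i<j}\tilde Q_{ij}\,\delta_{ij}.
\]
So minimizing the quadratic form is max-cut with weights $w_{ij}=\tilde Q_{ij}$, or equivalently min-cut with weights $-\tilde Q_{ij}$. Max-cut with your weights $-\tilde Q_{ij}$ would instead maximize $\tilde x^\top\tilde Q\tilde x$. The relation is also affine, with factor $-4$ and constant $\mathbf{1}^\top\tilde Q\mathbf{1}$, not just a shift by $\tr(\tilde Q)$. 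This is a bookkeeping fix and does not affect the equivalence. It also does not affect your converse: a max-cut instance becomes a QUTO with $\diag(Q)=\mathbf{0}$ and $c_j=2w_{0j}$, to which Step~1 applies.
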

\begin{proof}
    It follows from Theorem~\ref{Thm:fixing} that under the given conditions we can always find an optimal solution $x$ in the set $\{\pm 1\}^n$.  Therefore, the problem~\eqref{QUTO} is equivalent to the minimization version of the quadratic unconstrained binary optimization problem (QUBO)  with variables in~$\{\pm 1\}$, also known as the Ising QUBO, see e.g.,~\cite{punnen2022qubo}. The Ising QUBO, both in its minimization and maximization version, is equivalent to the max-cut problem, see e.g.,~\cite[Section 1.4.2]{punnen2022qubo}.  
\end{proof}

Our basic SDP relaxation for the QUTO is as follows:
\begin{align} \label{QUTOSDPbasic}
\begin{aligned} 
\min ~~ &  \langle Q, X \rangle  + c^\top x  \\
\st ~~  & X_{ii} = 1, \quad i\in N \\
& \diag(X) \geq x, \quad 
 \diag(X) \geq -x \\
& \diag(X) \leq {\mathbf 1}_n, \quad
 \begin{pmatrix}
1 & {x}^\top \\ {x} & {X}
\end{pmatrix} \succeq \mathbf{0},
\end{aligned}
\end{align}
where  $N = \{i \, : \, \, q_{ii} \leq 0\}$. 
One can  strengthen the SDP relaxation  \eqref{QUTOSDPbasic} by adding valid inequalities introduced in Section~\ref{Sect:SDP and cuts}.

\subsection{Ternary quadratic problem with one  linear constraint}
We study a variant of the TQP \eqref {ternaryProblem} with a single linear constraint, which we refer to as TQP-Linear. 
In particular, we present our basic SDP relaxation for the TQP-Linear and derive its strictly feasible formulation. 


The TQP in which a linear constraint enforces that the sum of the variables equals zero, is used in the design of turbomachines, see e.g.,~\cite{Phuong}. 
Inspired by this application, we consider the TQP \eqref{ternaryProblem} in which $a_1={\mathbf 1}_n$, $b_1=0$ and  $p=1$, thereby resulting in
the following TQP-Linear:
\begin{align} \label{turbine Problem}
\begin{aligned}
\min ~~ &  \langle Q, X \rangle  + c^\top x  \\
\st ~~ & {\mathbf 1}_n^\top x = 0, \quad 
\begin{pmatrix}
1 & {x}^\top \\ {x} & {X}
\end{pmatrix} \succeq \mathbf{0}, \quad \diag(X) = |x| \\
&  X \in \{0, \pm 1 \}^{(n+1) \times (n+1)}.
\end{aligned}
\end{align}
Our basic SDP relaxation for the TQP-Linear is as follows:
\begin{align} \label{ternaryProblemSDPbasic}
\begin{aligned} 
\min ~~ &  \langle Q, X \rangle  + c^\top x  \\
\st ~~ 
&\langle {\mathbf J},X\rangle = 0, \quad
 \diag(X) \geq x, \quad 
 \diag(X) \geq -x \\
&\diag(X) \leq {\mathbf 1}_n, \quad  \begin{pmatrix}
1 & {x}^\top \\ {x} & {X}
\end{pmatrix} \succeq \mathbf{0}.
\end{aligned}
\end{align}
Here we include the squared representation of the linear constraint, see also~\eqref{constr:quadratic}.
Note that the SDP relaxation~\eqref{ternaryProblemSDPbasic} does not include the constraint  $x^\top  \mathbf{1}_n = 0$, as it is redundant in the presence of the constraint $\langle {\mathbf J},X\rangle = 0$.  
\begin{lemma} \label{Lem:nullspace}
Let  $(X,x)$ satisfy $X-xx^\top \succeq \mathbf{0}$
and $\langle \mathbf{J}, X \rangle  = 0$. Then,   $x^\top  \mathbf{1}_n = 0$.
\end{lemma}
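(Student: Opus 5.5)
The plan is to test the positive semidefinite matrix $X - xx^\top$ against the all-ones vector $\mathbf{1}_n$. This yields a single scalar inequality relating $\langle \mathbf{J}, X\rangle$ and $(x^\top \mathbf{1}_n)^2$.

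First, since $X - xx^\top \succeq \mathbf{0}$, we have
\[
0 \leq \mathbf{1}_n^\top (X - xx^\top)\mathbf{1}_n = \langle \mathbf{J}, X\rangle - (\mathbf{1}_n^\top x)^2 .
\]
Here we used $\mathbf{1}_n^\top X \mathbf{1}_n = \langle \mathbf{1}_n\mathbf{1}_n^\top, X\rangle = \langle \mathbf{J}, X\rangle$ and $\mathbf{1}_n^\top x x^\top \mathbf{1}_n = (\mathbf{1}_n^\top x)^2$.

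Next, substitute the hypothesis $\langle \mathbf{J}, X\rangle = 0$. This gives $(\mathbf{1}_n^\top x)^2 \leq 0$, and since a square of a real number is nonnegative, $x^\top \mathbf{1}_n = 0$.

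If the hypothesis is stated via the lifted matrix $\begin{psmallmatrix}1 & x^\top\\ x & X\end{psmallmatrix}\succeq \mathbf{0}$, as in \eqref{ternaryProblemSDPbasic}, first invoke the Schur complement to get $X - xx^\top \succeq \mathbf{0}$. Alternatively, apply the quadratic form directly to the vector $(-t, \mathbf{1}_n^\top)^\top$ with $t = \mathbf{1}_n^\top x$; this gives $t^2 - 2t^2 + \langle \mathbf{J}, X\rangle \geq 0$, hence $t^2 \leq 0$.

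There is no real obstacle. The only point needing care is to notice that the PSD condition, tested against $\mathbf{1}_n$, is exactly what links the squared constraint $\langle \mathbf{J}, X\rangle = 0$ back to the linear constraint $x^\top \mathbf{1}_n = 0$.
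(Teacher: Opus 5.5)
Your proof is correct and follows essentially the same approach as the paper: both test the PSD matrix $X - xx^\top$ against $\mathbf{1}_n$ and use $\langle \mathbf{J}, X\rangle = 0$ to conclude $(x^\top \mathbf{1}_n)^2 \leq 0$. Your extra remark about the lifted matrix and the vector $(-t, \mathbf{1}_n^\top)^\top$ is also valid, but it is not needed for the lemma as stated.
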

\begin{proof}
   Let $Z:=X-xx^\top$. Then,
   $
   0 = \mathbf{1}_n^\top X \mathbf{1}_n = 
   \mathbf{1}_n^\top (Z + xx^\top ) \mathbf{1}_n$ $\Rightarrow$ 
  $(x^\top \mathbf{1}_n)^2 = - \mathbf{1}_n^\top Z \mathbf{1}_n \leq 0,
   $
   since $Z\succeq \mathbf{0}.$ Therefore, $x^\top \mathbf{1}_n = 0.$
\end{proof}
One can  verify that $v=(0, \mathbf{1}_n^\top)^\top$
is an eigenvector corresponding to the zero eigenvalue of any matrix that is feasible for \eqref{ternaryProblemSDPbasic}. Therefore, the SDP relaxation \eqref{ternaryProblemSDPbasic} has no Slater feasible points. We apply facial reduction to obtain an equivalent SDP relaxation that is Slater feasible. For an introduction to facial reduction, we refer the interested reader to~\cite{drusvyatskiy2017many} and the references therein. 

Let $\mathcal{R} := \left\{ x \in \mathbb{R}^{n+1} \, : \, \, v^\top x = 0 \right\}$ denote the orthogonal complement of $v$ and let
\begin{align}
    F_\mathcal{R} := \left \{ Y \in \mathcal{S}^{n+1}_+ \, : \, \, \Col(Y) \subseteq \mathcal{R} \right\}. 
\end{align}
It follows from Lemma~\ref{Lem:nullspace} that the feasible set of~\eqref{ternaryProblemSDPbasic} is contained in $F_\mathcal{R}$. It is well-known that sets of the form $F_\mathcal{R}$ are faces of $\mathcal{S}^{n+1}_+$~\cite{drusvyatskiy2017many}.
Now, $F_\mathcal{R}$ can be expressed as
$ F_\mathcal{R} = W\mathcal{S}^n_+ W^\top$,
where $W \in \mathbb{R}^{(n+1)\times n}$ is such that its columns form a basis of $\mathcal{R}$. A possible and sparse choice for $W$ is the matrix
\begin{align} \label{def:W}
    W = 
    \begin{pmatrix}
        1 & 0 & \mathbf{0}_{n-1}^\top  \\
        \mathbf{0}_{n-1} & \mathbf{1}_{n-1}  & -\mathbf{I}_{n-1}
    \end{pmatrix}^\top. 
\end{align}
We rewrite~\eqref{ternaryProblemSDPbasic} by replacing $\begin{pmatrix}
    1 & x^\top \\ x & X
\end{pmatrix}$ by $WZW^\top$, where $Z\in \mathcal{S}^n_+$. This yields:
\begin{align} \label{ternaryProblemSlaterFeas}
\begin{aligned} 
\min ~~ &  \left \langle \begin{pmatrix}
    0 & \frac{1}{2} c^\top \\ 
  \frac{1}{2}  c & Q
\end{pmatrix}, WZW^\top \right\rangle  \\
\st ~~ 
& \diag(WZW^\top) \geq WZW^\top \mathbf{e}_1, \quad 
 \diag(WZW^\top) \geq - WZW^\top \mathbf{e}_1 \\
& \diag(WZW^\top) \leq {\mathbf 1}_n, \quad \mathbf{e}_1^\top WZW^\top \mathbf{e}_1 = 1, \quad
 Z \succeq \mathbf{0},
\end{aligned}
\end{align}
which is equivalent to~\eqref{ternaryProblemSDPbasic}. Observe that all constraints that are present in~\eqref{ternaryProblemSDPbasic} are explicitly present in~\eqref{ternaryProblemSlaterFeas}, except for the constraint $\langle \mathbf{J}_n, X \rangle = 0$. This constraint, which after substitution is equivalent to $\left \langle \mathbf{J}_n, \begin{pmatrix}
    \mathbf{0}_n & \mathbf{I}_n
\end{pmatrix}  WZW^\top \begin{pmatrix}
    \mathbf{0}_n & \mathbf{I}_n.
 \end{pmatrix}^\top \right \rangle = 0$, becomes redundant in~\eqref{ternaryProblemSlaterFeas} due to the construction of $W$. 
We now state the following result regarding~\eqref{ternaryProblemSlaterFeas}. 

\begin{lemma}
    The SDP~\eqref{ternaryProblemSlaterFeas} is strictly feasible and equivalent to~\eqref{ternaryProblemSDPbasic}. 
\end{lemma}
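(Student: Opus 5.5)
Two things need proving: the equivalence of~\eqref{ternaryProblemSlaterFeas} and~\eqref{ternaryProblemSDPbasic}, and the strict feasibility of~\eqref{ternaryProblemSlaterFeas}.

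\emph{Equivalence.} We show that $Z \mapsto WZW^\top$ is a bijection between $\mathcal{S}^n_+$ and $F_\mathcal{R}$ that carries one feasible set onto the other with equal objective values. First, $W$ has full column rank and its columns span $\mathcal{R}$: the first column is $\mathbf{e}_0$, and the remaining columns are $\mathbf{e}_1 - \mathbf{e}_k$ for $k=2,\ldots,n$, all orthogonal to $v=(0,\mathbf{1}_n^\top)^\top$. Since $\dim\mathcal{R}=n$, this gives $F_\mathcal{R} = W\mathcal{S}^n_+W^\top$, with $Z$ uniquely recovered as $Z = W^\dagger Y (W^\dagger)^\top$.

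Given $Y$ feasible for~\eqref{ternaryProblemSDPbasic}, Lemma~\ref{Lem:nullspace} together with $\langle \mathbf{J},X\rangle=0$ and $Y\succeq \mathbf{0}$ gives $v^\top Y v = 0$, hence $Yv=0$. So $Y\in F_\mathcal{R}$, and $Y=WZW^\top$ for some $Z\succeq\mathbf{0}$.

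The remaining constraints transfer verbatim once the indexing is read correctly: the entry $Y_{00}=1$, the column $x = Y\mathbf{e}_0$ (the "$\mathbf{e}_1$" in the display plays the role of the zeroth unit vector), and the diagonal bounds. The objective is $\langle Q,X\rangle + c^\top x = \langle \begin{psmallmatrix}0 & c^\top/2\\ c/2 & Q\end{psmallmatrix}, Y\rangle$.

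Conversely, any $Z\succeq\mathbf{0}$ feasible for~\eqref{ternaryProblemSlaterFeas} gives $Y=WZW^\top\succeq\mathbf{0}$ with $Yv=0$, so $\langle\mathbf{J},X\rangle = v^\top Yv=0$ holds automatically. This is exactly the redundancy noted in the text, and it completes the equivalence.

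\emph{Strict feasibility.} We must exhibit $Z\succ\mathbf{0}$ satisfying the linear constraints of~\eqref{ternaryProblemSlaterFeas}. It suffices to find a feasible $Y$ for~\eqref{ternaryProblemSDPbasic} whose range equals all of $\mathcal{R}$. Then $Z = W^\dagger Y (W^\dagger)^\top$ has rank $n$, so $Z\succ\mathbf{0}$.

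The plan is to average rank-one feasible points $yy^\top$, $y=(1,x)$, with $x$ ternary and $\mathbf{1}^\top x=0$. Each such point satisfies every constraint, and the constraints are convex (linear inequalities/equalities plus PSD), so any convex combination is feasible.

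To get full range we need the vectors $(1,x)$ to span $\mathcal{R}$. Candidates are
\[
x=\mathbf{0} \text{ (giving } \mathbf{e}_0\text{)} \quad\text{and}\quad x = \mathbf{e}_i-\mathbf{e}_j .
\]
Since $(1,\mathbf{e}_i-\mathbf{e}_j) - \mathbf{e}_0 = (0,\mathbf{e}_i-\mathbf{e}_j)$, these span $\mathbf{e}_0$ together with all differences $\mathbf{e}_i-\mathbf{e}_j$, which is all of $\mathcal{R}$ (this needs $n\ge 2$). Taking $Y$ as the uniform average of these rank-one matrices gives the required Slater point.

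\emph{Main obstacle.} The only delicate step is the bookkeeping. One must check that the constraint $\mathbf{e}^\top WZW^\top\mathbf{e}=1$ refers to the $(0,0)$ entry, and that the "strict" feasibility required is only positive definiteness of $Z$, not strictness of the linear inequalities. With that reading, the argument above goes through.
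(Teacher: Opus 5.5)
Your proposal is correct and follows the paper's proof. Under $W$, the paper's Slater point $\hat Z=\tfrac1n\mathbf{e}_1\mathbf{e}_1^\top+\tfrac1n\sum_{k=2}^n(\mathbf{e}_1+\mathbf{e}_k)(\mathbf{e}_1+\mathbf{e}_k)^\top$ becomes the uniform average of the rank-one matrices generated by $(1,\mathbf{0})$ and $(1,\mathbf{e}_1-\mathbf{e}_k)$, $k=2,\dots,n$, which is a subfamily of your construction, and its positive definiteness via the full-rank factor $M$ is your spanning argument for $\mathcal{R}$; your equivalence argument is simply more explicit than the paper's, which only appeals to the construction and the preceding discussion.
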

\begin{proof}
    The equivalence between the two programs follows from the construction of~\eqref{ternaryProblemSlaterFeas} and the discussion above. To show that~\eqref{ternaryProblemSlaterFeas} is strictly feasible, let 
    \begin{align*}
        \hat{Z} := \frac{1}{n}\mathbf{e}_1\mathbf{e}_1^\top + \frac{1}{n} \sum_{k = 2}^n (\mathbf{e}_1 + \mathbf{e}_k)(\mathbf{e}_1 + \mathbf{e}_k)^\top = 
        \begin{pmatrix}
            1 & \frac{1}{n}\mathbf{1}_{n-1}^\top  \\[0.8em]
            \frac{1}{n}\mathbf{1}_{n-1} & \frac{1}{n}\mathbf{I}_{n-1}
        \end{pmatrix}. 
    \end{align*}
    Using $W$ as in~\eqref{def:W}, we define $\hat{Y} := W\hat{Z}W^\top \in \mathcal{S}^{n+1}$.
    By direct verification, it follows that $\diag(\hat{Y}) \geq \hat{Y}\mathbf{e}_1$, $\diag(\hat{Y}) \geq -\hat{Y}\mathbf{e}_1$, $\diag(\hat{Y}) \leq \mathbf{1}_{n+1}$ and $\mathbf{e}_1^\top \hat{Y} \mathbf{e}_1 = 1$. Moreover, since $\hat{Z}$ is defined as a convex combination of rank-one PSD matrices, we know $\hat{Z} \succeq \mathbf{0}$. Thus, $\hat{Z}$ is feasible for~\eqref{ternaryProblemSlaterFeas}. To show that $\hat{Z}$ is contained in the interior of $\mathcal{S}^n_+$, observe that  $M := \begin{pmatrix}
        \mathbf{e}_1 & \mathbf{e}_1 + \mathbf{e}_2 & \dots &\mathbf{e}_1 + \mathbf{e}_n
    \end{pmatrix}$ has rank $n$. Since $\hat{Z} = \frac{1}{n} MM^\top$, it follows that $\hat{Z}$ has full rank, implying that $\hat{Z} \succ \mathbf{0}$.     
\end{proof}

\subsection{Ternary quadratic problem with a ratio objective}
We study the quadratic program with a ratio objective and ternary variables, which we refer to as TQP-Ratio. 
We propose two approaches for solving the  TQP-Ratio. The first approach is based on a version of Dinkelbach’s algorithm for fractional programming~\cite{Dinkelbach1967NonlinearFractional}
 that relies on a B\&B algorithm for the QUTO, see Section \ref{sect:QUTO}. 
The second approach exploits the convexification technique  from~\cite{he2025convexification} to derive an SDP relaxation for the TQP-Ratio, that is used within our B\&B algorithm for solving instances of the TQP-Ratio to optimality.

We consider the following fractional quadratic optimization problem: 
\begin{align} \label{eq:frac_ternary}
\min_{x\in\{0,\pm 1\}^n}\ 
\frac{f(x)}{g(x)},
\end{align}
where $f(x):=x^\top A x + a^\top x + a_0,$ and
$g(x):=x^\top B x + b^\top x + b_0 > 0$ for all $x\in\{0, \pm 1\}^n.$
Ternary quadratic problems  with a ratio objective appear in many applications, see e.g., \cite{hotho2006information,Trevisan2009MaxCut}.
In the sequel, we provide two approaches for solving~\eqref{eq:frac_ternary} to optimality. \\


\noindent {\bf Parametric optimization.} 
Dinkelbach~\cite{Dinkelbach1967NonlinearFractional} introduced a framework for solving convex fractional programs. Dinkelbach's approach was later extended to solve general fractional programs~\cite{Rodenas1999}. To the best of our knowledge, we are the first to exploit Dinkelbach's  approach for ternary quadratic problems. To this end, we introduce the following parameterized optimization problem  w.r.t.~$\lambda \in \mathbb{R}$  associated with \eqref{eq:frac_ternary}: 
\begin{align}\label{Phi}
\Phi(\lambda ) := \min_{x \in \{0, \pm 1\}^n} \left (  f(x) -  \lambda g(x)  \right ).   
\end{align} 
It is straightforward to verify that the function $\Phi(\cdot)$ is concave and strictly monotonically decreasing.
Moreover, for any $\lambda = f(x)/g(x)$ with $x \in \{0, \pm 1\}^n$, it holds that   $\Phi(\lambda) \leq 0$. 
As shown by Dinkelbach~\cite{Dinkelbach1967NonlinearFractional}, we have
\begin{align*}
      \lambda^* = \min_{x \in \{0,\pm 1\}^n} \frac{f(x)}{g(x)} \quad \Longleftrightarrow \quad \Phi(\lambda^*) = 0,
\end{align*}
where the minimum in $\Phi(\lambda^*)$ is attained at $x^* \in  \argmin_{x \in \{0,\pm 1\}^n} {f(x)}/{g(x)}$. 
In other words, to solve the problem~\eqref{eq:frac_ternary}, it suffices to find the unique root of the parametrized function $\Phi(\cdot)$. Due to the properties of the function $\Phi(\cdot)$, this root can be found by iteratively evaluating the function $\Phi(\lambda)$ at an incumbent point $\lambda = f(x)/g(x)$ for some $x \in \{0,\pm 1\}^n$, which is updated accordingly if $\Phi(\lambda) < 0$.

For a given  $\lambda$, each main iteration of Dinkelbach’s algorithm \cite{Rodenas1999} solves the parametrized ternary optimization problem \eqref{Phi}, that can be equivalently formulated as follows:
\begin{align} \label{DinkelSubproblem}
\begin{aligned}
\Phi(\lambda )  = \min ~~& 
\left \langle  \begin{pmatrix} 
a_0- \lambda b_0 & \frac{1}{2} (a^\top-\lambda b^\top) \\
\frac{1}{2}(a-\lambda b ) &   A - \lambda B
 \end{pmatrix} , \begin{pmatrix} 
1 & {x}^\top \\ {x} & {X}
\end{pmatrix}   \right \rangle \\
\st ~~&   \begin{pmatrix} 
1 & {x}^\top \\ {x} & {X}
\end{pmatrix} \succeq \mathbf{0}, \quad \diag(X) = |x|, \quad  X \in \{0, \pm 1 \}^{ n \times n }.
\end{aligned}
\end{align}
The optimization problem \eqref{DinkelSubproblem} is the QUTO  whose basic SDP relaxation is~\eqref{QUTOSDPbasic}.
Schaible~\cite{Schaible} proved superlinear convergence of Dinkelbach’s algorithm.
A similar convergence result can be established for a version of Dinkelbach’s algorithm applied to the TQP-Ratio problem.
Our version of Dinkelbach’s algorithm iteratively updates the parameter $\lambda$ by solving a sequence of parametric  subproblems~\eqref{DinkelSubproblem}, see Algorithm \ref{alg:Dinkelbach}. Observe that the starting solution $x$ is computed using a Variable Neighborhood Search heuristic, which is introduced in Section~\ref{sect:VNS}. \\

\begin{algorithm}[!ht]
\footnotesize
\caption{Dinkelbach’s algorithm for the TQP-Ratio} \label{alg:Dinkelbach}
\begin{algorithmic}[1]
\STATE  \textbf{input}  $(A,a,a_0)$, $(B,b,b_0)$, $\epsilon >0$
\STATE $x \gets$ VNS-Heuristic (Section \ref{sect:VNS}), \,
 $\lambda \gets f(x)/g(x) $
  \WHILE{true}
    \STATE $x \in \argmin_{x\in \{0,\pm1\}^n} \left (f(x)-\lambda g(x) \right ) $ \\[0.7ex]
    \IF{$| f(x)- \lambda g(x)| < \epsilon$}
    \STATE \textbf{return} $x^* \gets x$, $\lambda^* \gets f(x^*)/g(x^*)$
        \ENDIF
        \STATE $\lambda \gets f(x)/g(x) $
  \ENDWHILE
\STATE \textbf{return} $x^*$, $\lambda^*$
\end{algorithmic}
\end{algorithm}

\noindent
{\bf An SDP relaxation for TQP-Ratio with no fractional terms.}
We follow the approach of He et al.,~\cite{he2025convexification} to reformulate  \eqref{eq:frac_ternary} as an equivalent optimization problem without fractional terms.
Define the rescaling
\begin{align} \label{rhoRescale}
\rho := g(x)^{-1},\qquad
y := \rho x,\qquad
Y := \rho\, x x^\top .
\end{align}
Then for any $x\in\{0, \pm 1\}^n$ we have the identities
${f(x)}/{g(x)} \ =\ \langle A,Y\rangle + a^\top y + a_0\rho$ and
$\langle B,Y\rangle + b^\top y + b_0\rho \ =\ 1.$
It follows from \cite{he2025convexification} that~\eqref{eq:frac_ternary} 
can be equivalently reformulated as the following optimization problem
\begin{align}\label{eq:exact_proj_conv}
\begin{aligned}
\min_{\rho,y,Y}\quad 
& \langle A,Y\rangle + a^\top y + a_0 \rho\\
\text{s.t.}\quad
& \langle B,Y\rangle + b^\top y + b_0 \rho = 1,\\
& \rho\ge 0,\,\, |y| = \diag(Y), \,\,
Y \in \rho \,\mathrm{conv} \left \{\mathcal{F}^1_n\right \},
\end{aligned}
\end{align}
where the set $\mathcal{F}^1_n$ is defined in   \eqref{F1n}, see also \eqref{IQ1n}.
To derive an SDP relaxation for \eqref{eq:exact_proj_conv}, we introduce the positive semidefinite matrix variable
$Z \ :=\
\begin{pmatrix}
\rho & y^\top\\
y & Y
\end{pmatrix} \succeq \mathbf{0}.$
Moreover, for $x_i\in\{0, \pm 1\}$ we have the implications
$Y_{ii}=\rho x_i^2\in\{0,\rho\}$ and 
$|y_i|= \rho x_i^2=Y_{ii}$.
This motivates the linear inequalities
$0\le Y_{ii}\le \rho$ and $-Y_{ii}\le y_i\le Y_{ii}$ for all $i\in [n].$
The previous discussion 
yields the following SDP relaxation of~\eqref{eq:exact_proj_conv}:
\begin{equation}
\label{eq:sdp_relax}
\begin{aligned}
\min_{}\quad 
& \langle A,Y\rangle + a^\top y  + a_0\rho  \\
\text{s.t.}\quad
& \langle B,Y\rangle + b^\top y  + b_0\rho = 1 \\
& \diag(Y) \geq y, \quad \diag(Y) \geq - y \\
&  \rho\ge 0, \quad \diag(Y) \leq \rho {\mathbf 1}_n,
\quad  
\begin{pmatrix}
\rho & y^\top\\
y & Y
\end{pmatrix}\succeq \mathbf{0}. 
\end{aligned}
\end{equation}
In order to strengthen the above SDP relaxation, one can add scaled versions of the cuts discussed 
in \Cref{Sect:SDP and cuts}. Rescaling is performed with respect to~$\rho$, see \eqref{rhoRescale}.
Our preliminary results suggest that adding the following constraints to the SDP relaxation~\eqref{eq:sdp_relax}
is most beneficial for strengthening it:
Scaled  triangle inequalities
\begin{align} \begin{aligned}
     Y_{ij} +  Y_{ik} + Y_{jk}  &\geq -\rho, \,\, &
     -Y_{ij} + Y_{ik} - Y_{jk}  &\geq -\rho, \,\, \\
   Y_{ij} - Y_{ik} - Y_{jk}     &\geq -\rho, \,\, &
     -Y_{ij} - Y_{ik} + Y_{jk}  & \geq -\rho,
     \end{aligned}
     \end{align}
for    $1\leq i < j < k \leq n$,  see also \eqref{C:eq1}--\eqref{C:eq2.3};
scaled RLT inequalities 
\begin{align}  
\begin{aligned}
Y_{ij} + y_i+ y_j &\geq -\rho, \,\, &
Y_{ij} - y_i- y_j &\geq -\rho, \,\, \\
-Y_{ij} + y_i- y_j &\geq -\rho, \,\, &
-Y_{ij} - y_i + y_j &\geq  -\rho,
\end{aligned}
\end{align}
for $i,j\in [n]$, $i<j$,  see also  \eqref{rlt:1}--\eqref{rlt:4};   split inequalities of the form \eqref{newsplit1}--\eqref{newsplit4}, and non-standard split inequalities of the form \eqref{C:eq4}.
Split inequalities for the SDP relaxation~\eqref{eq:sdp_relax} are obtained from the inequalities~\eqref{newsplit1}--\eqref{newsplit4} by replacing $X$ by $Y$, and $x$ by $y$, and similar for non-standard split inequalities.

\section{SDP-based branch-and-bound algorithm for the TQP} \label{Sec:BandB}
In this section we incorporate our basic SDP relaxation and the derived cutting planes from Section~\ref{Sect:SDP and cuts} in a specialized branch-and-bound algorithm to solve TQP instances. We tailor this algorithm to the three special cases discussed in Section~\ref{Sec:SpecialCases}. We refer to the resulting method as \texttt{SDP-B\&B}.

\texttt{SDP-B\&B} systematically explores a tree of subproblems, starting from the root node. At each node of the tree, a lower bound is obtained by strengthening the basic SDP relaxation~\eqref{ternaryProblemSDP} by a cutting-plane procedure. For the special cases of the  QUTO and the TQP-Linear, this relaxation boils down to solving~\eqref{QUTOSDPbasic} and \eqref{ternaryProblemSDPbasic}, respectively. 
A primal heuristic is exploited to find valid upper bounds for the TQP. This heuristic takes a solution of the strengthened SDP relaxation as an input, hence ideally resulting in better solutions when the lower bounds improve.
Our branching strategy takes a fractional variable and fixes this variable to one of the ternary values in $\{0, \pm 1\}$, so that the parent node (i.e., subproblem) is split into three child nodes. Details on the ingredients of our algorithm are provided in the next subsections. 

\subsection{Primal heuristic}
\label{sect:VNS} 
We design a Variable Neighborhood Search (VNS) he\-uristic to compute high-quality feasible solutions.
The method follows the classical VNS framework \cite{mladenovic1997variable}, alternating between (i) a randomized shaking phase to diversify the search and escape local minima and (ii) a deterministic best-improvement local search phase
to intensify around promising solutions \cite{ hansen2010variable}.
We first present the heuristic for the QUTO, then describe the adaptation for TQP-Linear, and finally detail the specialization for the TQP-Ratio.

\paragraph{VNS for QUTO}
For a current solution $x$, we maintain the auxiliary vector $\alpha(x)=Qx$. The local neighborhood is defined by all single-coordinate ternary changes
$x_i \leftarrow v$, with $v\in\{-1,0,1\}\setminus\{x_i\}$.
Let $\Delta=v-x_i\in\{-2,-1,1,2\}$, $\tilde{x}$ denote the resulting solution, and $f(x) = x^\top Q x + c^\top x$ denote the objective of the QUTO, then the objective variation is
$$
f({\tilde{x}})
=
f(x)
+2\Delta\,\alpha_i
+\Delta^2 Q_{ii}
+\Delta\,c_i .
$$
Hence, any trial move can be evaluated in $O(1)$ time.
If a move is accepted, the auxiliary vector is updated as $\alpha \leftarrow \alpha + \Delta\,Q_{:i}$, where $Q_{:i}$ is the $i$th column of $Q$,
which requires $O(n)$ time.
Algorithm~\ref{alg:vns_quto} summarizes the VNS scheme.
The local search phase evaluates all $2n$ candidate moves in $O(n)$ time, selects the best improving one,
and iterates until no improving move exists, yielding a 1-move local optimum.
The shaking phase randomly modifies $s$ coordinates with $s\in [s_\textrm{min},s_\textrm{max}]$.

Whenever an improvement over the incumbent solution is found, the neighborhood radius is
reset to its minimum value; otherwise, it is increased to promote diversification.

\begin{algorithm}[!ht]
\footnotesize
\caption{Variable Neighborhood Search for QUTO}\label{alg:vns_quto}
\begin{algorithmic}[1]
\STATE  \textbf{input}  $s_\textrm{min}$, $s_\textrm{max}$, $s_\textrm{step}$, ${iter}_\textrm{max}$
\STATE Choose $x\in\{0,\pm 1\}^n$ and compute $(f, \alpha)$
\STATE $(x_{\text{best}},f_{\text{best}}) \gets (x,f)$
\FOR{$it=1$ to $iter_{\max}$}
  \STATE $s \gets s_{\min}$
  \WHILE{$s \le s_{\max}$}
    \STATE $x \gets$ Shake$(x,s)$
    \STATE $x \gets$ LocalSearch$(x)$
    \STATE Recompute $f$
    \IF{$f < f_{\text{best}}$}
      \STATE $(x_{\text{best}},f_{\text{best}}) \gets (x,f)$
      \STATE $s \gets s_{\min}$
    \ELSE
      \STATE $x \gets x_{\text{best}}$ and recompute $(f,\alpha)$
      \STATE $s \gets s + s_{\text{step}}$
    \ENDIF
  \ENDWHILE
\ENDFOR
\STATE \textbf{return} $x_{\text{best}}$
\end{algorithmic}
\end{algorithm}

\paragraph{VNS for TQP-Linear}
Here, feasibility is restricted by the linear constraint $\mathbf{1}^\top_n x =0$.
The local neighborhood is therefore defined by paired ternary changes on two distinct
indices $i\neq j$, $x_i\leftarrow v_i, \ x_j\leftarrow v_j$, with $v_i,v_j\in\{-1,0,1\}$ such that $(v_i-x_i)+(v_j-x_j)=0$. Let $\Delta_i=v_i-x_i$ and $\Delta_j=v_j-x_j$.
Using again $\alpha(x)=Qx$, the objective variation is
\[
f({\tilde{x}})
= f(x)
+2\Delta_i\,\alpha_i
+2\Delta_j\,\alpha_j
+\Delta_i^2 Q_{ii}
+\Delta_j^2 Q_{jj}
+2\Delta_i\Delta_j Q_{ij}
+\Delta_i c_i
+\Delta_j c_j .
\]
Each feasible paired move can be evaluated in $O(1)$ time.
When such a move is accepted, the auxiliary vector is updated as $\alpha \leftarrow \alpha + \Delta_i Q_{:i} + \Delta_j Q_{:j}$.
The local search phase explores this 2-move neighborhood and applies the best
improving feasible move until convergence.
The shaking phase is adapted accordingly by generating random feasible paired moves.
Apart from the use of paired moves, the overall VNS structure is identical to the QUTO case. Since the balanced neighborhood contains $O(n^2)$ candidate moves, one complete
neighborhood scan requires $O(n^2)$ time.

\paragraph{VNS for TQP-Ratio}
For the fractional case, the local neighborhood
again consists of single-coordinate ternary moves.
The objective is the ratio denoted by $R(x)=f(x)/g(x)$.
For a current solution $x$, we maintain the auxiliary vectors $\alpha^A(x)=Ax$, $\alpha^B(x)=Bx$. For a move $x_i\leftarrow v$ with $\Delta=v-x_i$, the numerator and denominator variations are
\begin{align*}
\begin{aligned}
f(\tilde{x}) = f(x) + 2\Delta\,\alpha^A_i + \Delta^2 A_{ii} + \Delta\,a_i, \qquad
g(\tilde{x}) = g(x) + 2\Delta\,\alpha^B_i + \Delta^2 B_{ii} + \Delta\,b_i .
\end{aligned}
\end{align*}
Thus, each trial move and the corresponding ratio value can be evaluated in $O(1)$ time.
If the move is accepted, the auxiliary vectors are updated in $O(n)$ time as $\alpha^A \leftarrow \alpha^A + \Delta\,A_{:i}$, $
\alpha^B \leftarrow \alpha^B + \Delta\,B_{:i}.$

The local search phase selects the best feasible move that strictly improves the ratio and
terminates when no improving move exists, yielding a solution that is locally optimal with
respect to all single-coordinate ternary changes.
The VNS structure (shaking, intensification, and diversification) is identical to the
QUTO case. 


\subsection{Cut separation}
\label{ref: cut separation}
At each node in the B\&B tree, violated triangle, split, RLT, and \(k\)-gonal inequalities are iteratively added to the relaxation.  The separation problems for triangle, (non-standard) split, and RLT inequalities can be solved exactly by exhaustive enumeration. In contrast, the number of $k$-gonal inequalities grows rapidly with the problem dimension, making exhaustive enumeration computationally prohibitive. Consequently, we employ heuristic separation strategies for these classes. Following the approach proposed in \cite{gusmeroli2022biqbin,HrgaPovh2021} for the max-cut problem, the separation of \(k\)-gonal inequalities is formulated as the quadratic assignment problem (QAP)
$\min_{P \in \Pi} \ \langle H, P X^\star P^\top \rangle,
$
where \(\Pi\) denotes the set of permutation matrices, \(X^\star\) is the current SDP solution, and 
\(H = vv^\top\) encodes the structure of the corresponding inequality defined by a vector \(v\); 
see~\eqref{oddS}. The vector \(v\) has nonzero entries in its first \(k\) components, with 
\(k \in \{5, 7, 9\}\) determining the inequality type. 
Since the QAP is $\mathcal{NP}$-hard, we solve it approximately using simulated annealing combined with a 
multistart strategy based on random initial permutations. 


\subsection{Branching strategy}
\label{sect: branching strategy}
We employ a ternary branching scheme reflecting the discrete domain \(x_i \in \{0,\pm1\}\).
Whenever the SDP solution at a node is not integral, branching is performed by fixing one selected 
variable to each of its three possible values. In particular, branching on variable \(x_i\) 
creates three child nodes by imposing $(x_i, X_{ii}) \in \{(-1,1),\; (0,0),\; (1,1)\}$.
The branching variable is selected using a most-fractional rule adapted to the ternary domain. 
For each variable \(x_i\), its fractionality is defined as the distance to the nearest feasible 
value, $\phi(x_i) = \min\bigl\{ |x_i + 1|,\; |x_i|,\; |x_i - 1| \bigr\}$,
and we choose $i^\star \in \arg\max_i \; \phi(x_i)$.
For the TQP-Ratio formulation, we instead branch on the variable exhibiting the largest violation 
of the scaled rank-one constraint \(\rho Y_{jk} \approx y_j y_k\). Given the SDP solution 
\((\rho^\star,y^\star,Y^\star)\), we select
\begin{align*}
    i^\star \in \arg\max_{j \in [n]} 
\sum_{k=1}^n Q_{kj}\bigl(y_j^\star y_k^\star - \rho^\star Y_{jk}^\star\bigr).
\end{align*}
The scaled rank-one constraints follow from 
$|y| = \diag(Y)$, $Y \in \rho \,\mathrm{conv} \left \{\mathcal{F}^1_n\right \}$
Empirically, this branching rule outperforms the most-fractional strategy for solving 
\eqref{eq:exact_proj_conv}, although detailed computational results are omitted for brevity.

\subsection{Implementation details}

Before starting the B\&B procedure, we perform 100 local searches  using the VNS algorithm initialized from randomly generated points, and the best solution found is used to initialize the global upper bound. In the VNS heuristic, we set $s_\textrm{min}=2$, $s_\textrm{max}=n$, $s_\textrm{step}=2$, and $iter_\textrm{max}=3$. Nodes of the B\&B tree are explored using a best-first search strategy. At each cutting-plane iteration, inequalities are tested for violation with tolerance \(10^{-3}\), and at most 5{,}000 of the most violated inequalities are added. 
For the $k$-gonal inequalities, the simulated annealing procedure is executed 500 times for pentagonal ($k=5$) inequalities and 1{,}000 times for heptagonal ($k=7$) and enneagonal ($k=9$) inequalities. The heuristic is run for all the distinct vectors $v$ in \eqref{oddS}. The cutting-plane procedure is terminated when fewer than \(n\) violated inequalities are identified. The algorithm terminates when the relative optimality gap satisfies $(UB - LB)/UB \leq 10^{-4}$, where $UB$ and $LB$ denote the best upper and lower bounds, respectively. 
{\tt SDP-B\&B} is implemented in {\tt MATLAB} (version 2025a). 
The SDP relaxation at each node is solved with the interior-point solver 
{\tt MOSEK} (version 11.0).
All experiments are performed on a macOS system equipped with an Apple M4 Max processor (14 cores) and 36~GB of RAM, running macOS version~15.5.

\section{Numerical results} \label{sect:numResults}
In this section, we report the computational results for the QUTO, TQP-Linear, and TQP-Ratio formulations of the proposed \texttt{SDP-B\&B} algorithm and compare it with 
state-of-the-art solvers.

We consider three types of instances for the  QUTO and TQP-Linear called respectively Type-1, Type-2 and Type-3. Furthermore, we generate a set of random instances for the TQP-Ratio. Details on instance generation and parameter configuration are described in Appendix \ref{app:instance_generation}.
For all problem classes, we consider problem sizes $n \in \{60,70,80,90,100,110,120\}$ and three random seeds for each parameter configuration, resulting in 189 instances for each of the QUTO and TQP-Linear, and 63 instances for the TQP-Ratio.

We conducted experiments at the root note to evaluate the effectiveness of different classes of valid inequalities introduced in Section \ref{Sect:SDP and cuts}. Based on these experiments that we do not report for the sake of brevity, we implement within \texttt{SDP-B\&B} a cutting-plane strategy that first enumerates all computationally inexpensive classes of inequalities jointly, namely triangle, RLT, and split inequalities, and subsequently generates violated $ k$-gonal inequalities with $k=5$ and $k=7$. Observe that the enneagonal (i.e., $k$-gonal with $k = 9$) inequalities are no longer considered, due to their high computational cost.

\subsection{Branch-and-bound results}
In this section, we evaluate the computational performance of {\tt SDP-B\&B} and compare it with the commercial solver {\tt GUROBI} \cite{gurobi} (version 12.0.2). We set the optimality gap tolerance to $10^{-4}$, as for {\tt SDP-B\&B}. We also conducted experiments using {\tt BARON} (version 25.3.19) \cite{Sahinidis2016BARON} and {\tt SCIP} \cite{Achterberg2009} (version 10.0.0); however, both solvers consistently exhibited inferior performance compared to {\tt GUROBI}. For this reason, their results are not reported in the following analysis. The comparison focuses on the QUTO, TQP-Linear, and TQP-Ratio instances, with problem sizes $n \in \{60, 70, 80, 90\}$. For these instances, we set a time limit of 3,600 seconds. In addition, to assess the scalability of the proposed approach, we also consider large-scale instances with $n \in \{100, 110, 120\}$. For these cases, we run only {\tt SDP-B\&B} with an increased time limit of 10,800 seconds. This choice is motivated by the observation that {\tt GUROBI} already reached the time limit of 3,600 seconds on a substantial number of instances in the smaller size range ($n$ between 60 and 90).

\begin{figure}[!ht]
    \centering
    \includegraphics[scale=0.21]{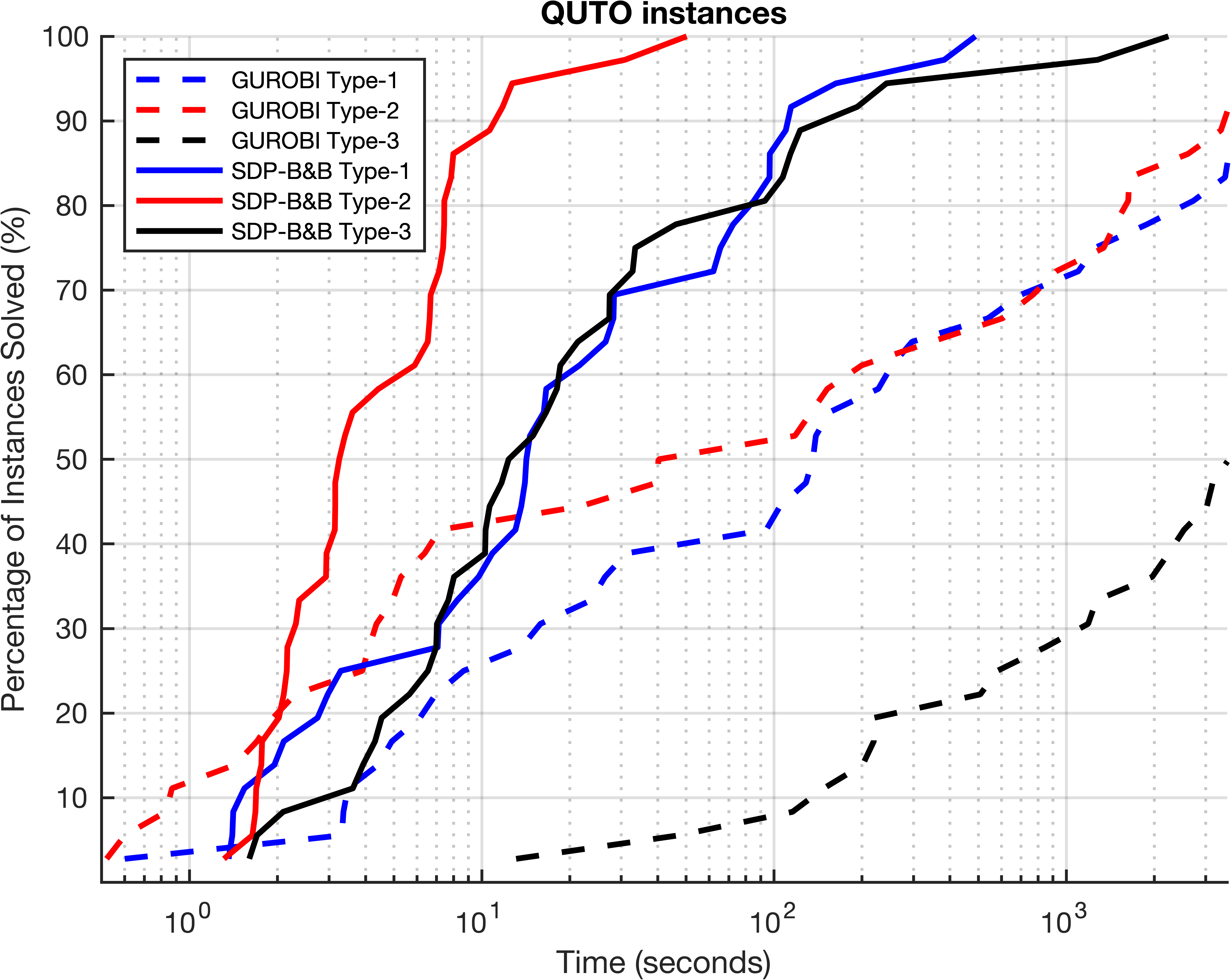}%
    \hspace{2.5mm}
    \vspace{2mm}
    \includegraphics[scale=0.21]{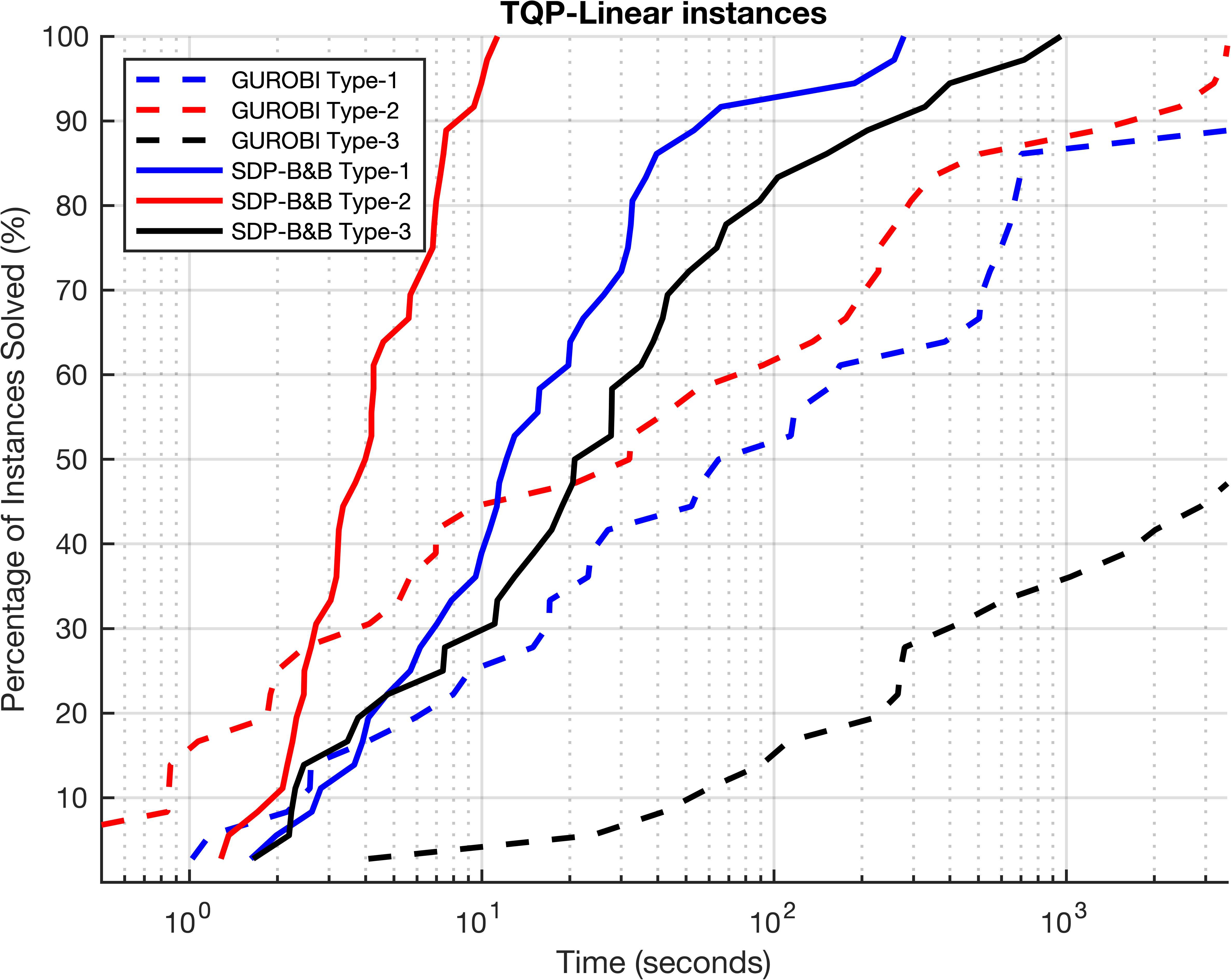}
    \includegraphics[scale=0.21]{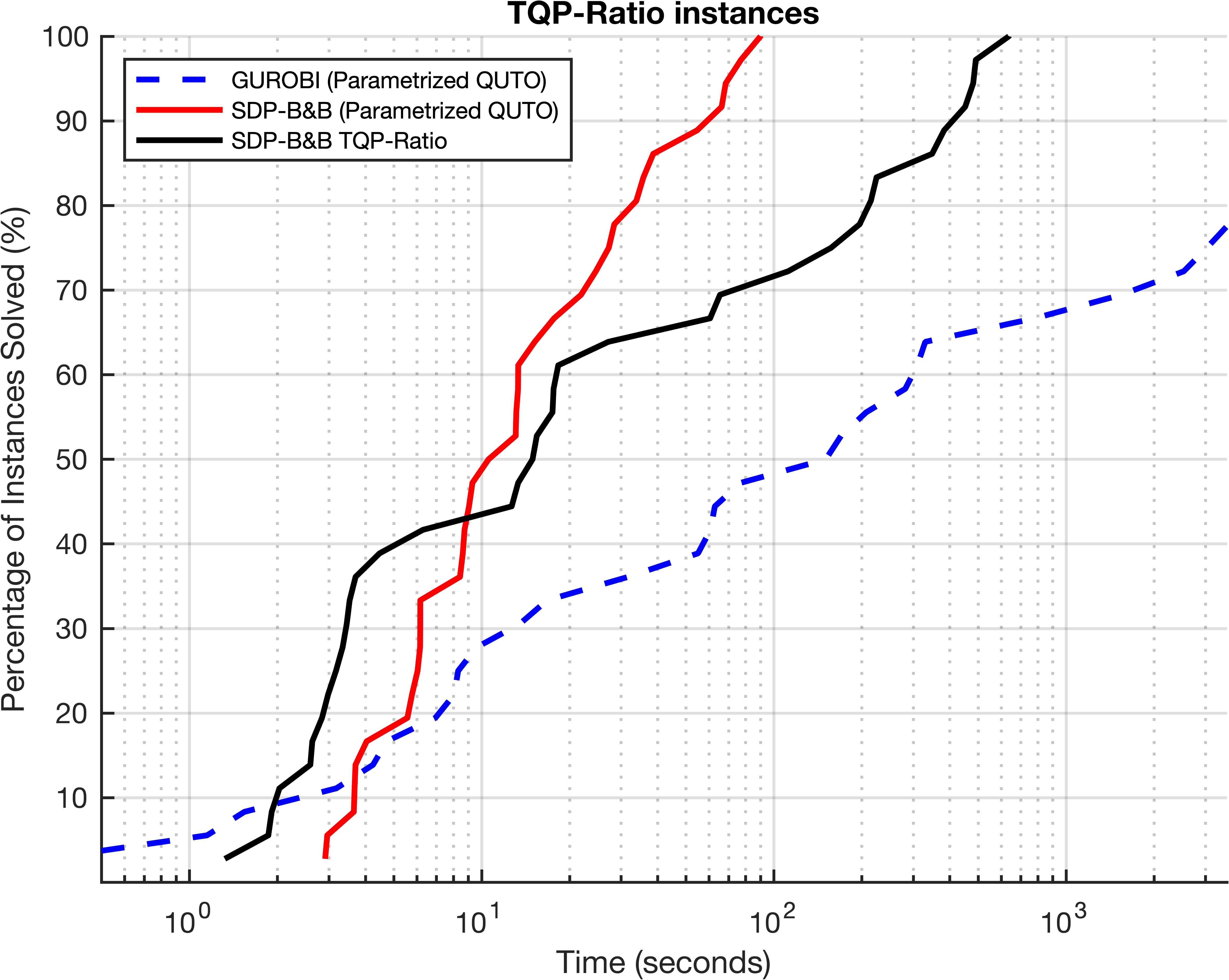}
    \caption{Survival plots comparing {\tt SDP-B\&B} with {\tt GUROBI} on QUTO, TQP-Linear and TQP-Ratio instances of size $n \in \{60, 70, 80 ,90\}$. The plot reports the percentage of problems solved over time, where the time is represented on a logarithmic scale.}
    \label{fig:surv}
\end{figure}

\begin{table*}[!ht]
\centering
\tiny
\caption{Average results of {\tt SDP-B\&B} for the QUTO, TQP-Linear, and TQP-Ratio instances ($n=100\text{--}120$). All metrics are averaged over three random seeds. Column ``sol'' reports the number of instances successfully solved.}
\label{tab:avg_results_large}
\begin{subtable}{0.48\textwidth}
\centering
\caption{QUTO}
\begin{tabular}{rrrrrr}
\toprule
$n$ & $p$ & gap (\%) & nodes & time (s) & sol\\
\midrule
\multicolumn{6}{c}{{Type-1}} \\
\midrule
100 & 25 & 0.00 & 6.00 & 101.40 & 3 \\
100 & 50 & 0.00 & 4.00 & 96.27 & 3 \\
100 & 75 & 0.00 & 33.00 & 366.03 & 3 \\
110 & 25 & 0.00 & 20.00 & 541.19 & 3 \\
110 & 50 & 0.00 & 114.00 & 3752.82 & 3 \\
110 & 75 & 0.00 & 112.00 & 1899.71 & 3 \\
120 & 25 & 0.00 & 79.00 & 5026.98 & 3 \\
120 & 50 & 0.94 & 53.00 & 7807.28 & 3 \\
120 & 75 & 0.00 & 184.00 & 8342.39 & 3 \\
\midrule
\multicolumn{6}{c}{{Type-2}} \\
\midrule
100 & 25 & 0.00 & 1.00 & 8.81 & 3 \\
100 & 50 & 0.00 & 1.00 & 15.86  & 3\\
100 & 75 & 0.00 & 4.00 & 49.50  & 3\\
110 & 25 & 0.00 & 1.00 & 17.70  & 3\\
110 & 50 & 0.00 & 1.00 & 16.17  & 3\\
110 & 75 & 0.00 & 6.00 & 163.61  & 3\\
120 & 25 & 0.00 & 1.00 & 23.02  & 3\\
120 & 50 & 0.00 & 2.00 & 66.19  & 3\\
120 & 75 & 0.00 & 8.00 & 217.43  & 3\\
\midrule
\multicolumn{6}{c}{{Type-3}} \\
\midrule
100 & 25 & 0.00 & 57.00 & 1338.23 & 3 \\
100 & 50 & 0.00 & 80.00 & 1893.25  & 3\\
100 & 75 & 0.62 & 101.33 & 4554.90 & 2\\
110 & 25 & 0.00 & 91.00 & 3249.31  & 3\\
110 & 50 & 0.31 & 96.33 & 5544.80  & 2\\
110 & 75 & 2.05 & 160.33 & 10567.30 & 1\\
120 & 25 & 0.96 & 118.33 & 8424.84 & 1\\
120 & 50 & 0.54 & 141.33 & 8283.22 & 2\\
120 & 75 & 1.29 & 151.33 & 10782.52 & 1\\
\bottomrule
\end{tabular}
\end{subtable}
\hfill
\begin{subtable}{0.48\textwidth}
\centering
\caption{TQP-Linear}
\begin{tabular}{rrrrrr}
\toprule
$n$ & $p$ & gap (\%) & nodes & time (s)  & sol\\
\midrule
\multicolumn{6}{c}{{Type-1}} \\
\midrule
100 & 25 & 0.00 & 8.00 & 154.34 & 3 \\
100 & 50 & 0.00 & 7.00 & 172.36 & 3\\
100 & 75 & 0.00 & 4.00 & 135.71 & 3\\
110 & 25 & 0.00 & 16.00 & 369.44 & 3\\
110 & 50 & 0.00 & 81.00 & 2811.27 & 3\\
110 & 75 & 0.00 & 24.00 & 1146.84 & 3\\
120 & 25 & 0.00 & 47.00 & 2097.98 & 3\\
120 & 50 & 0.61 & 105.33 & 8044.35 & 1\\
120 & 75 & 0.00 & 67.00 & 4059.90 & 3\\
\midrule
\multicolumn{6}{c}{{Type-2}} \\
\midrule
100 & 25 & 0.00 & 1.00 & 7.34 & 3 \\
100 & 50 & 0.00 & 1.00 & 12.50  & 3\\
100 & 75 & 0.00 & 1.00 & 17.83  & 3\\
110 & 25 & 0.00 & 1.00 & 15.33  & 3\\
110 & 50 & 0.00 & 1.00 & 12.90  & 3\\
110 & 75 & 0.00 & 1.00 & 24.96  & 3\\
120 & 25 & 0.00 & 7.00 & 87.87  & 3\\
120 & 50 & 0.00 & 1.00 & 36.95  & 3\\
120 & 75 & 0.00 & 2.00 & 145.60  & 3\\
\midrule
\multicolumn{6}{c}{{Type-3}} \\
\midrule
100 & 25 & 0.00 & 175.00 & 3747.39  & 3\\
100 & 50 & 0.44 & 101.33 & 3808.51 & 2\\
100 & 75 & 0.45 & 121.33 & 4455.57 & 2\\
110 & 25 & 0.00 & 93.00 & 3280.33 & 3\\
110 & 50 & 0.20 & 115.67 & 4904.71 & 2\\
110 & 75 & 1.59 & 189.00 & 9918.83 & 1\\
120 & 25 & 1.35 & 81.33 & 7816.62 &1\\
120 & 50 & 0.39 & 114.67 & 7203.70 & 2 \\
120 & 75 & 1.29 & 133.33 & 9628.57 &1\\
\bottomrule
\end{tabular}
\vspace{0.25cm}
\hfill
\end{subtable}
\begin{subtable}{0.48\textwidth}
\centering
\caption{TQP-Ratio (Parametrized QUTO)}
\begin{tabular}{rrrrrr}
\toprule
$n$ & $d$ & iter & nodes & time (s) & sol \\
\midrule
100 & 25 & 1.00 & 10.00 & 335.98 & 3\\
100 & 50 & 1.00 & 138.00 & 4640.18 & 3\\
100 & 75 & 1.00 & 92.00 & 2632.86 & 3\\
110 & 25 & 1.00 & 45.00 & 2064.74 & 3\\
110 & 50 & 1.00 & 83.00 & 5179.87 & 2\\
110 & 75 & 1.00 & 104.67 & 7915.80 & 1\\
120 & 25 & 1.00 & 68.00 & 3715.54 & 3\\
120 & 50 & 1.00 & 93.33 & 10800.00 & 0\\
120 & 75 & 1.00 & 82.33 & 9571.01 & 1\\
\bottomrule
\end{tabular}
\end{subtable}
\hfill
\begin{subtable}{0.48\textwidth}
\centering
\caption{TQP-Ratio}
\begin{tabular}{rrrrrr}
\toprule
$n$ & $d$ & gap (\%) & nodes & time (s) & sol\\
\midrule
100 & 25 & 0.00 & 92.00 & 1090.39 & 3\\
100 & 50 & 0.25 & 441.33 & 8362.24 & 1\\
100 & 75 & 0.23 & 580.67 & 8525.15 & 1\\
110 & 25 & 0.25 & 147.67 & 5192.44 & 2\\
110 & 50 & 0.53 & 393.67 & 10800.00 & 0\\
110 & 75 & 0.60 & 631.00 & 10800.00 & 0\\
120 & 25 & 0.29 & 168.33 & 5917.93 & 2\\
120 & 50 & 1.37 & 370.33 & 10800.00 & 0\\
120 & 75 & 0.14 & 309.67 & 6206.39 & 2\\
\bottomrule
\end{tabular}
\end{subtable}
\end{table*}

In Figure \ref{fig:surv}, we summarize the comparison between {\tt SDP-B\&B} and {\tt GUROBI} on the three classes of problems. The plot reports the percentage of instances solved by each method over time, with time displayed on a logarithmic scale on the x-axis.

In the upper-left panel, results for the QUTO instances are shown. {\tt SDP-B\&B} clearly outperforms {\tt GUROBI}: it solves all instances within the one-hour time limit, whereas {\tt GUROBI} cannot. Indeed, {\tt GUROBI} fails to solve 6 out of 36 instances of Type-1 of sizes 80 and 90, leaving residual gaps between 1.8\% and 6\%. For Type-2 instances, {\tt GUROBI} fails on 4 out of 36 instances of size 90, with residual gaps ranging from 2.2\% to 5.5\%. Type-3 instances are the most challenging: {\tt GUROBI} cannot solve 19 instances with sizes 70, 80, and 90, with residual gaps between 4.5\% and 25\%.

The upper-right panel reports results for the TQP-Linear instances. Again, {\tt SDP-B\&B} solves all instances within the time limit, whereas {\tt GUROBI} does not. For Type-1 instances, {\tt GUROBI} fails on 5 instances of sizes 80 and 90, with residual gaps between 0.02\% and 0.07\%. For Type-2 instances, only one instance remains unsolved, with a residual gap of 1.4\%. For Type-3 instances, {\tt GUROBI} fails on 19 instances of sizes between 70 and 90, with residual gaps ranging from 0.03\% to 0.1\%.

In the second row of Figure \ref{fig:surv}, we compare three approaches: (i) the Dinkelbach algorithm where the QUTO subproblem is solved by {\tt SDP-B\&B} (Parametrized QUTO); (ii) the Dinkelbach algorithm where the QUTO subproblem is solved by {\tt GUROBI}; and (iii) the direct solution of the TQP-Ratio via {\tt SDP-B\&B}, exploiting relaxation~\eqref{eq:sdp_relax}. The Dinkelbach algorithm is initialized with the solution obtained by the VNS-Ratio heuristic, which turned out to be optimal on all tested instances; hence, only one QUTO subproblem needs to be solved for each instance. 
The plot again shows the percentage of instances solved over time. Here, {\tt GUROBI} is clearly outperformed by both of our approaches, which solve all instances within the time limit, whereas {\tt GUROBI} fails on 18 out of 36 instances. Moreover, our implementation of the parametrized QUTO is generally faster than solving the TQP-Ratio  directly.

Table~\ref{tab:avg_results_large} reports the average performance of {\tt SDP-B\&B} on large-scale instances with $n \in \{100, 110,120\}$ for the QUTO, TQP-Linear, and TQP-Ratio. Results are averaged over the three random seeds per parameter configuration. Overall, the method scales well up to \(n=120\), with performance strongly influenced by the instance type.

For both the QUTO and TQP-Linear, Type-2 instances are the easiest, typically solved at the root node or with very few nodes due to the favorable low-rank structure, which leads to tight relaxations. Type-1 instances show moderate growth in difficulty with \(n\), but most cases are still solved to optimality, with only small residual gaps at the largest size (at most \(0.94\%\) for the QUTO and \(0.61\%\) for the TQP-Linear). Type-3 instances are the most challenging, as the structure weakens the bounds and results in larger search trees and longer running times. Nevertheless, even in these cases {\tt SDP-B\&B} produces high-quality solutions, with remaining gaps typically below about \(2\%\) (maximum \(2.05\%\) for the QUTO and \(1.59\%\) for the TQP-Linear).

The TQP-Ratio results highlight some differences between the parametrized QUTO (subtable (c))  and the direct solution (subtable (d)) approaches. The parametric method is often more efficient, but when the iterative procedure does not terminate within the time limit, it cannot provide a valid optimality gap for the original ratio problem. In contrast, the direct solution approach always produces a lower bound and therefore a certified gap, even when prematurely terminated. The reported gaps for the direct solution approach remain small overall, ranging between \(0.14\%\) and \(1.37\%\) on the unsolved instances. Moreover, there are configurations in which the direct approach outperforms the parametric one, as for example for $n=120$ and $d=\{50,75\}$. 
Hence, the direct solution approach remains valuable whenever solution certification is required, or when the Dinkelbach scheme is not initialized with a high-quality solution, in which case multiple outer iterations may be needed. In our experiments, the VNS heuristic consistently identifies a solution that is already optimal, so that no further Dinkelbach updates are necessary. Consequently, the computational effort of the parametric method essentially corresponds to solving a single QUTO instance, explaining its good practical performance.

\section{Conclusions}
We study the TQP from the perspective of ISDP.
Our novel approach yields a theoretical analysis of the set of ternary PSD matrices (\Cref{prop:triangle_ineq}) and the associated polyhedra (see~\eqref{IQ1n} and~\Cref{Prop:IQ_extendedform}). 
These results enable us to derive an ISDP formulation for the TQP~\eqref{ternaryProblemISDP}, from which we obtain several classes of cutting planes, including (generalized) triangle inequalities and (non-standard) split inequalities, 
that improve our basic SDP relaxation for the TQP~\eqref{ternaryProblemSDP}.

We consider three variants of the TQP;  QUTO~\eqref{QUTO},   TQP-Linear~\eqref{turbine Problem}, and 
 TQP-Ratio~\eqref{eq:frac_ternary}. We prove in \Cref{Thm:fixing} that the QUTO reduces to the max-cut problem for a particular quadratic objective.
We also derive a reformulation of the TQP-Ratio  \eqref{eq:exact_proj_conv} that does not involve rational terms.

Finally, we design a B\&B algorithm that solves, at each node of the tree, the basic SDP relaxation strengthened by a number of valid inequalities.
The  B\&B algorithm integrates a ternary branching scheme  with sophisticated primal heuristics, yielding a method that outperforms significantly competing approaches.
Overall, these results position ISDP as a competitive new approach for ternary quadratic programming.

\bibliography{myRefs.bib}

\appendix

\section{Instance generation}
\label{app:instance_generation}
We use the following three types of data generators to obtain instance for the TQP-Linear~\eqref{turbine Problem}.
\begin{enumerate}
\item[(i)] Type-1 instances are generated as follows, see also \cite{Buchheim2012SemidefiniteRF}.
For given $n \in \mathbb{N}$ and $p\in \{25,50,75\}$, we sample $\lfloor pn/100 \rfloor$ values uniformly from the interval $[-1,0]$,
and the remaining  $n-\lfloor pn/100 \rfloor$ values uniformly from $[0,1]$. These values form the vector $\mu$.
Then, we generate an $n\times n$ matrix whose elements are drawn uniformly at random from $[-1,1]$
and orthonormalize its columns to obtain vectors $v_i$. Finally, we set $Q= \sum_{i=1}^n \mu_i v_i v_i^\top.$ 

\item[(ii)] Type-2 instances provide matrices $Q$ whose rank is $\lceil n/2\rceil$, see also \cite{Buchheim2018SDPbasedBF} for a similar generator.
For given $n \in \mathbb{N}$, we generate an $n\times n$ matrix whose elements are drawn uniformly at random from $[-1,1]$
and orthonormalize its columns to obtain vectors $v_i$.
Then, the first $\lfloor n/2\rfloor $ entries of  $\mu$ are set to zero, and each remaining entry is nonnegative with probability $p/100$  and zero otherwise.
We consider $p\in \{25,50,75\}$. 

\item[(iii)]  Type-3 instances provide random sparse matrices $Q$, see also \cite{Buchheim2018SDPbasedBF}.
For given $n \in \mathbb{N}$ and $p\in \{25,50,75\}$, each entry of the matrix $Q$ is zero with probability $1-p/100$ and the remaining entries are drawn uniformly at random from 
the interval $[-1,1]$.
\end{enumerate}
In all above types of instance generators, elements of $c$ are drawn uniformly at random from $[-1,1]$.

We generate QUTO  instances that do not reduce to max-cut instances. In particular, we use the same three types of generators as described above and take the absolute values of the diagonal elements, ensuring the existence of positive diagonal elements, see \Cref{corr:maxcut}.

We generate  instances  for the TQP-Ratio~\eqref{eq:frac_ternary} as follows. For each test instance, the numerator coefficients $(A,a,a_0)$ and denominator coefficients $(B,b,b_0)$
are generated independently at random. The quadratic matrices $A,B\in\mathcal{S}^{n}$ are  sparse: given a target density $d \in [0, 100]$, each entry in the lower triangular part is set to a nonzero value with probability
$d/100$ and to zero otherwise, and symmetry is enforced by mirroring to the upper triangular
part. Whenever an entry is nonzero, it is sampled uniformly from the integer interval $\{-50,\ldots,50\}$.
The linear vectors $a,b\in\mathbb{R}^n$ and the constant term $a_0$ are generated independently with
i.i.d.\ entries uniformly distributed over $\{-50,\ldots,50\}$. 
We set $b_0 \;=\; 1 + \sum_{i=1}^n |B_{ii}| \;+\; 2\sum_{1\le i<j\le n} |B_{ij}| \;+\; \sum_{i=1}^n |b_i|,$
which ensures $g(x)\ge 1$ for all  $x\in\{0,\pm 1\}^n$.

\end{document}